\pgfplotsset{compat=1.15,ticks=none}
\tikzset{>=latex}
\numberwithin{equation}{section}
\definecolor{qqwuqq}{rgb}{0,0,0}
\begin{document}

\date{\today}

\title[Zeta and Fredholm determinants]{Zeta and Fredholm 
determinants of 
	self-adjoint operators}

\author{Luiz Hartmann}
\address{Departamento de Matem\'atica, 
	Universidade Federal de S\~ao Carlos (UFSCar),
	Brazil}
\email{hartmann@dm.ufscar.br, luizhartmann@ufscar.br}
\urladdr{http://www.dm.ufscar.br/profs/hartmann}
\thanks{Partially support by FAPESP (2018/23202-1, 2021/09534-4) and we 
gratefully 
acknowledge the financial
	support of the Hausdorff Center for Mathematics, Bonn.}

\author{Matthias Lesch}
\address{Universit\"at Bonn,
	Germany}
\email{ml@matthiaslesch.de, lesch@math.uni-bonn.de}
\urladdr{www.matthiaslesch.de, www.math.uni-bonn.de/people/lesch}

\subjclass[2020]{Primary 58J52 ; Secondary 47B10}
\keywords{Discrete dimension spectrum, Zeta-determinant, Fredholm determinant}

	\maketitle

\begin{abstract} 
Let $L$ be a self-adjoint invertible operator in a Hilbert space such that 
$L^{-1}$ is $p$-summable. Under a certain discrete dimension spectrum 
assumption on $L$, we study the relation between the (regularized) Fredholm 
determinant, $\det_{p}(I+z\cdot L^{-1})$, on the one hand and the 
zeta regularized determinant, $\det_{\zeta}(L+z)$, on the other. One of 
the 
main results is the formula
\begin{equation*}
	\frac{\detz (L + z)}{\detz (L)} = \exp\Bl \sum_{j=1}^{p-1} 
	\frac{z^j}{j!} \cdot
	\frac{d^j}{dz^j} 
	\log \detz (L+z) |_{z=0} \Br \cdot \det_{p}(I + z \cdot L^{-1} ).
\end{equation*}
We show that the derivatives $\frac{d^j}{dz^j} 
\log \detz (L+z) |_{z=0}$ can be expressed in terms of (regularized) zeta 
values and heat trace coefficients of $L$. 
Furthermore, we give a general criterion in terms of the heat trace
coefficients (and which is, \eg fulfilled for large classes of elliptic 
operators) which guarantees that the constant term in the asymptotic 
expansion of the Fredholm determinant, $\log\det_{p} (I+z\cdot L^{-1})$, 
equals the zeta determinant of $L$.
\end{abstract}

\tableofcontents

\section{Introduction}
\label{s.Intro}

The classical theory of trace ideals (see \cite{Sim2005}) allows to 
generalize the 
notion of determinant to operators which differ from the identity by a 
$p$-summable operator. More concretely, let first $A$ be a trace class 
operator in a separable Hilbert space. Then 
\begin{equation*}
	\begin{aligned}
		\det (I + z\cdot A) &= \sum_{k=0}^{\infty} \tr(\Lambda^k A)\cdot z^k 
		=\prod_{n=1}^\infty (1+ z\cdot \lambda_n(A))
	\end{aligned}
\end{equation*}
is an entire function of $z$ with zeros exactly in $-1/\gl_{n}(A)$, where 
$\gl_{n}(A)$ are the non-zero eigenvalues of $A$ counted with multiplicity. 
This supports the intuition that $z\mapsto \det(I+z\cdot A)$ plays the role 
of 
the 
``characteristic polynomial'' of $A$. The value of $\det (I+z\cdot A)$ at $z$ 
is 
called Fredholm determinant of $I+z\cdot A$.

The notion of Fredholm determinant can be lifted to operators $A$ which are 
only in some Schatten class $\B^{p}(\calH)$ for some $p\geq 1$ (see 
\cite{Sim77}). 
Then $z\mapsto \det_{N+1}(I+z\cdot A)$, $N+1\geq p$, a regularized version of 
the Fredholm determinant, is an entire function of order at most $N+1$, again 
with zeros exactly in $-1/\gl_{n}(A)$. The regularization process to obtain 
$\det_{N+1}$ is the classical Weierstraß method of convergence generating 
factors which lead to canonical Weierstraß products of finite genus. 

A completely different notion of regularized determinants appears in the 
theory 
of elliptic operators on manifolds. Here one encounters {\it unbounded} 
self-adjoint operators $L$ with a meromorphic zeta function ($\zeta$-function)
\begin{equation*}
	\zeta(s;L) = \sum_{\gl\in\spec L\setminus\{0\}} \gl^{-s}
\end{equation*}
and, by some heuristics, on puts
\begin{equation*}
	\det_{\zeta}L:=\exp(-\zeta'(0;L))
\end{equation*}
and calls this the {\it zeta regularized determinant ($\zeta$-regularized 
determinant)} of $L$. This 
definition goes back to the celebrated work by Ray and Singer \cite{RS1971}. 
It has deep applications,\eg the Cheeger-M\"uller theorem on the 
analytic torsion.

The definition of $\zeta(s;L)$ extends easily to 
m-sectorial\footnote{\emph{An operator $L$ in a Hilbert space is m-sectorial 
if
its spectrum lies in a sector $z_0+\bigsetdef{z\in\C}{ |\arg z|<\delta }$
for some $\z_0>0$ and some $\delta<\pi$ and such that outside the
sector the resolvent satisfies an estimate $ \| (L - z)\ii\| \le C |z|\ii$
for $|z|\geq r>0$, for some $r>0$. The argument function is normalized such
that $\arg 1 = 0$. Cf. \cite[Sec. V.3.10]{Kat}}.} operators. We will only 
need this for the
\emph{normal} operator $L+z$ where $-z$ is in the resolvent set of $L$.
The $\zeta$-function 
\begin{equation}\label{eq.Intro.Zeta.L+z}
	\zeta(s;L+z) = \sum_{\gl\in\spec L\setminus\{0\}} (\gl+z)^{-s},
\end{equation}
a priori defined for $\Re s$ large and $-z$ in the resolvent set of $L$
is then a holomorphic function of two variables (see also Proposition
\ref{p.taylor.exp.logdetzeta} below).

It is then not hard to see that, say for an elliptic self-adjoint 
differential 
operator $L$ on a closed manifold, the function $z\mapsto \det_{\zeta}(L+z)$ 
extends to an entire function of finite order with zeros exactly in 
$-\gl_{n}(L)$, 
where $\gl_{n}(L)$ are the eigenvalues of $L$ counted with multiplicity. 

In \cite[Proposition 4.6]{Les:DRS} it was shown that for certain 
Sturm-Liouville operators one 
has 
\begin{equation}\label{eq-Lesch-MathNach}
	\frac{\det_{\zeta}(L+z)}{\det_{\zeta}(L)} = \det(I+z\cdot L^{-1}),
\end{equation}
that is there is a simple relation between the $\zeta$-regularized 
determinant 
on the one hand and the Fredholm determinant on the other hand.

One of the goals of this paper is to find the most general purely functional 
analytic setting which guarantees \Eqref{eq-Lesch-MathNach} to hold. The 
result is the
Theorem \ref{Theo-Main} in Section \ref{ss.RZDAE} below which generalizes 
\Eqref{eq-Lesch-MathNach} to 
\begin{equation}\label{Eq-Intro-2}
	\frac{\detz (L + z)}{\detz (L)} = \exp\Bl \sum_{j=1}^{p-1} 
	\frac{z^j}{j!} \cdot
	\frac{d^j}{dz^j} 
	\log \detz (L+z) |_{z=0} \Br \cdot \det_{p}(I + z \cdot L^{-1} ),
\end{equation}
Here $L$ is a self-adjoint bounded below operator in the Hilbert space 
$\calH$ with $p$-summable resolvent, $p\in \Z$, $p\geq 1$. Furthermore, 
roughly speaking it is needed that $\zeta(s;L)$ is meromorphic in $\C$ with 
$0$ being a regular point and $s=1,\ldots, p-1$ being at most simple poles. 
The numbers $\frac{d^j}{dz^j} \log\detz (L+z)|_{z=0}$ are ``global'' 
invariants of $L$ in the sense that they cannot be extracted from the heat 
expansion of $L$, respectively as poles of $\zeta(s;L)$. However, these 
numbers can be expressed in terms of (partie finie) zeta values and heat 
trace coefficients of $L$ (see Proposition \ref{p.taylor.exp.logdetzeta}).
 The 
non-trivial 
information of \Eqref{Eq-Intro-2}, however, is that
\begin{equation*}
	\log\detz(L+z) - \log\detz (L) - \log\det_p(I+z\cdot L^{-1})
\end{equation*}
is a polynomial of degree at most $p-1$, vanishing at $z=0$. For $p=1$ this 
specializes to \Eqref{eq-Lesch-MathNach}. The equality \eqref{Eq-Intro-2} 
contains 
another 
important information. Namely, if 
$\log\detz (L+z)$ has an asymptotic expansion as $\Re z\to \infty$ then so 
does 
$\log\det_p (I + z\cdot L^{-1})$.

It should have become clear that in general the $\zeta$-determinant 
is notoriously difficult to compute because it is a relatively rigid
spectral invariant which cannot be determined ``locally'' from, e.g.,
the resolvent or heat trace expansion. However, for \emph{some} operators
(see \cite{HLV17} and the references therein) explicit computations are
possible. Then the natural question arises whether one can say anything
meaningful about the $\zeta$-determinant of some perturbation of a
sufficiently nice operator. In this paper we intensively study the
behavior of the $\zeta$-determinant $\detz(L+z)$ as a function of
the resolvent parameter.

Equation \eqref{Eq-Intro-2}
leads us naturally to the second topic of this paper. Namely,
we would like to explore further the relation between the 
$\zeta$-re\-gu\-la\-ri\-zed determinant and the asymptotic expansion of 
$\det_{N+1}(I+z\cdot L^{-1})$ as $\Re z\to \infty$. To motivate this, we 
formally 
take the logarithmic derivative of $z\mapsto \det_{\zeta}(L+z)$ to obtain
\begin{equation*}
	\tr \Bl(L+z)^{-1}\Br;
\end{equation*}
this exists only if $L^{-1}$ is trace class. If $L^{-1}$ is in some Schatten 
class then differentiating often enough gives up to a constant factor
\begin{equation*}
	\tr \Bl(L+z)^{-N-1}\Br.
\end{equation*}
If the latter has an asymptotic expansion as $z\to \infty$ (as is the case 
for the elliptic differential operators as above) the integration should lead 
to an asymptotic expansion of $\log\det_{\zeta}(L+z)$, respectively 
$\log\det_{N+1}(I + z \cdot L^{-1})$, as $\Re z\to \infty$. For elliptic 
(pseudo)differential operators 
on compact manifolds (resp. 
elliptic boundary value problems if a boundary is present) the existence of 
the 
asymptotic expansion of $\log\detz(L+z)$ as $\Re z\to \infty$ can be proven 
by 
genuinely microlocal methods, see \cite[Appendix]{BFK92}.

We show in this paper that no further microlocal methods are needed. Rather 
the expansion of $\log\detz(L+z)$ as $\Re z\to \infty$ is equivalent to the 
resolvent trace expansion of $\tr (L+z)^{-p}$ as $\Re z\to \infty$ which is 
equivalent to the short time asymp\-totic expansion of the heat trace 
(Theorem 
\ref{Theorem-AsympExpLogZetaDet}, Section \ref{Sub-sec-Heat-Resolvent}). This 
contains the expansion result of \cite[Appendix]{BFK92} as special case.

Together with \eqref{Eq-Intro-2} we consequently show in a purely functional 
analytic setting that the existence of the heat, resp. resolvent trace 
expansion (plus a mild assumption on the absence of $\log$-terms which is 
made mostly for convenience) implies the existence of an asymptotic expansion 
as $\Re z\to\infty$ of $\log\det_{p}(I+z\cdot L^{-1})$.  In Corollary 
\ref{Cor-AEFD}, at the end, we give a precise expression of the coefficients 
of 
the expansion in terms of the heat expansion coefficients, resp. 
$\zeta$-function values. An interesting observation is that the constant term 
in the expansion of $\log\det_{p}(I+z\cdot L^{-1})$ equals $\log\detz L$. 
This generalizes the main result of \cite{Fri}. In the case of certain 
Sturm-Liouville operators, similar results were obtained before in \cite[p. 
3439]{HLV17} and in \cite{LesTol:DOD}. Recently, there has been quite some 
interest in concrete computations of $\zeta$-determinants and Fredholm 
determinant. We mention, \eg \cite{GKSpr,GKJFA,GKQAM} and the references 
therein.

\section{Generalities}
\label{s.Gen}

\subsection{Notation}
\label{Subsection-Notation}
We will denote by $\Z, \R, \C$ the integer, real, and complex
numbers, respectively, $\Z_+:=\{0,1,2,\ldots\}$ denotes the non-negative 
integers
and by $\N = \{1,2,3,\ldots\}$ we denote the natural numbers.

For an analytic function $f:\setdef{z\in \C}{0<|z-a|<\epsilon}\to \C$ in a 
punctured neighbourhood of $a\in \C$, we denote by $\Res^{m}_a f$ the 
coefficient of $(z-a)^{-m}$ in the Laurent expansion of $f$ about $a$, \ie 
the Laurent expansion reads   
\begin{equation}\label{Eq-DefRes}
	f(z) = \sum_{m=-\infty}^{\infty}  (\Res^{-m}_a f) \cdot  (z-a)^m.
\end{equation}
For the ordinary residue, we write $\Res_a f\equiv 
\Res_{a}^1f =a_{-1}$.

The letters $o, O$ will denote the Landau (small and big) $O$-symbols.

\subsection{Regularized integral and partie finie}
\label{ss.RIPF}
In this paper we will use freely the useful partie finie regularized integral,
\cf \cite[Sec.~2.1]{Les:OFT}, \cite{Les:DRS,LesTol:DOD},
\cite[Sec.~1]{LesVer11}, \cite[Sec.~1]{LesVer15}, \cite[Sec.~1]{HLV17}. However,
to fix some notation and for the convenience of the reader we present here the
cheat sheet version.

Let $f:(0,\infty)\to \C$ be a locally integrable function with asymptotic
expansions 
\begin{equation}\label{eq.gen.expansion}
\begin{split}
     f(x) &\sim \sum_{\ga \in \sA} \sum_{k=0}^{k_\ga^0} a_{\ga k}^0  x^{\ga} 
     \log^k x,
                   \quad \text{as } x\to 0,  \\
     f(x) &\sim \sum_{\gb \in \sB} \sum_{k=0}^{k_\gb^\infty} a_{\gb k}^\infty 
     x^{\gb} \log^k x,
                   \quad \text{as } x\to \infty.
\end{split} 
\end{equation}
Here, $\sA, \sB$ are discrete sets of complex numbers with the property that the
intersection with each finite vertical strip is finite. Moreover $+\infty$ is
the only accumulation point of $\Re \ga, \ga\in\sA$, and $-\infty$ is the only
accumulation point of $\Re \gb, \gb \in \sB$.  One could also work with partial
asymptotic expansions but for our purposes complete asymptotic expansions
suffice. The class of such functions $f$ satisfying \Eqref{eq.gen.expansion}
will be called ``regular functions with complete asymptotics on $\R_+$``.  
The sets $\sA, \sB$ may vary from function to
function. For convenience we adopt the convention that for $f$ the coefficients
$a_{\ga k}^{0}$, $a_{\ga k}^{\infty}$ are defined for all $\ga\in\C$ and all
$k\in\Z_+$ with the understanding that $a_{\ga k}^*=0$ for all but the countably
many $\ga\in \sA$, resp. $\sB$, and that for each fixed $\ga\in\C$ the
coefficients $a_{\ga k}^*$ vanish for all but finitely many $k$. The expansion
may then be written briefly as
\begin{equation*} 
    \sum_{\ga,k} a_{\ga k}^{0/\infty} \, x^\ga \log^k x.
\end{equation*}     

If $f: (0,\infty)\to \C$ is a regular function with complete asymptotics on 
$\R_+$ we
write
\begin{equation} 
  \begin{split} 
    \LIM\limits_{x\to 0} f(x)  := a_{00}^0, 
        \quad \LIM\limits_{x\to\infty} f(x):= a_{00}^\infty,
  \end{split} 
\end{equation} 
for the \emph{regularized limits} of $f$ as $x\to 0$, resp. $x\to\infty$. In
other words the regularized limit of $f$ as $x\to 0$ ($x\to\infty$) is the
coefficient of $x^0 \, \log^0 x$ in the asymptotic expansion of $f$ as $x\to 0$
($x\to \infty$).

If $f$ is a regular function with complete asymptotics on $\R_+$ then so is
$F(x):= \int_1^x f(x) dx$ and hence one puts
\begin{equation} 
  \regint\limits_0^\infty f(x) dx 
    := \LIM\limits_{x\to\infty} F(x) - \LIM\limits_{x\to 0} F(x)
     = \LIM_{R\to\infty}\LIM_{\eps\to 0} \int_\eps^R  f(x) dx,
\end{equation}
where the order of $\LIM$ after the second $=$ is inconsequential.  The
regularized integral is a convenient extension of the integral to the class of
regular functions with complete asymptotics. 

It has its peculiarities, though. We emphasize that changes of variables require
some care. For dilations one has the following formula
\cite[Lemma~2.1.4]{Les:OFT}:
\begin{equation}\label{eq.subst}
  \gl \regint_0^\infty f(\gl u) du
    = \regint_0^\infty f(x) dx
      + \sum_{k=0}^\infty \frac{ a_{-1, k}^\infty - a_{-1, k}^0}{k+1} 
      \log^{k+1} \gl.
\end{equation}
Another peculiarity is
\begin{equation}\label{eq.regzero}
  \regint_0^\infty x^\ga \cdot \log^k x \; dx =0,
\end{equation}
for \emph{all} $\ga\in\C, k\in\Z_+$ \cite[Eq.~2.1.12]{Les:OFT}.

\subsubsection{Mellin transform and relation to Hadamard's partie finie}
\label{sss.MTHPF}
For a regular function with complete asymptotics $f$ one defines for any $c>0$
the partial Mellin transforms by
\begin{equation} 
\begin{split} 
    (M_c^+f) (z)&:= \int_c^\infty x^{z-1} f(x) dx, \quad \Re z \ll 0, \\
    (M_c^-f) (z)&:= \int_0^c x^{z-1} f(x) dx, \quad \Re z \gg 0.
\end{split} 
\end{equation}
It turns out \cite[Sec.~2.1]{Les:OFT} that $M_c^{\pm}$ extends meromomorphically
to $\C$ and the \emph{Mellin transform} $(\sM f)(z):= (M_c^+f)(z) + (M_c^-f)(z)$
is well-defined independently of the choice of $c$. Furthermore, it is a
meromorphic function with poles of order at most $\max(k_\ga^\infty,
k_\ga^{0})+1$ in $-\ga$ \cite[Prop-Def. 2.1.2]{Les:OFT}.
There are explicit formulas for the principal part of $\sM f$ about $-\ga$
in terms of $a_{\ga,k}^{0/\infty}$, \cf \cite[Eq. 2.1.8 and
2.1.9]{Les:OFT}.
This may be viewed as a version of Watson's Lemma (see \cite[pg. 
133]{Wat1918})  for the Mellin transform. 

Outside the poles of $\sM f$ one now finds the convenient global representation
\begin{equation} 
    (\sM f)(z) = \regint_0^\infty x^{z-1} f(x) dx
\end{equation} 
for all $z\in \C$ except the set $-\sA \cup (-\sB)$.

The Mellin transform can be used for an alternative (equivalent) approach to the
regularized integral: namely, if $F$ is a meromorphic function in a 
neighbourhood
of $a\in \C$ one defines the \emph{finite part} of $F$ at $a$ by
\begin{equation} 
   \Pf_{z=a} F(z):= (\Pf F)(a) := \Res_{z=a} \Bl (z-a)\ii F(z) \Br,
\end{equation}
where the residue notation is explained in Section \ref{Subsection-Notation}.
In other words $(\Pf F)(a)$ is the constant term in the Laurent expansion of
$F(z)$ about $a$. Needless to say, if $F$ is regular at $a$ then 
$(\Pf F)(a) = F(a)$. Fortunately, it turns out that the finite part of the
Mellin transform at $1$ of a regular function with complete asymptotics gives
its regularized integral:

\begin{proposition}[{\cite[Prop.~2.1.7]{Les:OFT}}]
\label{t.Pf} Let $f:(0,\infty)\to \C$ be a regular function
with complete asymptotics. Then 
\begin{equation*}
     \regint_0^\infty f(x) dx = \Pf_{z=1} \sM f( z ) 
         = \Pf_{z=0} \int_0^1 x^z f(x) dx + \Pf_{z=0} \int_1^\infty x^z f(x) dx,
\end{equation*} 
where the integrals are a priori defined for $\Re z$ large, resp. small, and 
then
analytically continued to $\C$ (a neighbourhood of $z=0$ would suffice).
\end{proposition}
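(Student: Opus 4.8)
The plan is to prove the two equalities separately and to reduce everything to the choice $c=1$ in the definition of the Mellin transform. The second equality is essentially bookkeeping: with $c=1$ we have $\sM f(z) = \int_0^1 x^{z-1} f(x)\,dx + \int_1^\infty x^{z-1} f(x)\,dx$, and substituting $z\mapsto z+1$ in each summand, together with the fact that the finite part of a sum of meromorphic functions is the sum of the finite parts (the constant Laurent coefficient depends linearly on the function), yields $\Pf_{z=1}\sM f(z) = \Pf_{z=0}\int_0^1 x^z f(x)\,dx + \Pf_{z=0}\int_1^\infty x^z f(x)\,dx$. Thus it suffices to establish the first equality $\regint_0^\infty f(x)\,dx = \Pf_{z=1}\sM f(z)$, and by the same additivity I may compute $\Pf_{z=1}$ of the two partial Mellin transforms $G_-(z) := \int_0^1 x^{z-1} f(x)\,dx$ and $G_+(z) := \int_1^\infty x^{z-1} f(x)\,dx$ separately.

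The key device is integration by parts against the antiderivative $F(x) := \int_1^x f(t)\,dt$, which is again a regular function with complete asymptotics and satisfies $F(1)=0$. For $\Re z$ large the boundary term at $x=0$ in $G_-$ vanishes, and for $\Re z$ very negative the boundary term at $x=\infty$ in $G_+$ vanishes, so that
\[
  G_-(z) = -(z-1)\int_0^1 x^{z-2} F(x)\,dx, \qquad G_+(z) = -(z-1)\int_1^\infty x^{z-2} F(x)\,dx.
\]
Writing $s=z-1$, the integrals on the right are precisely the lower and upper partial Mellin transforms of $F$, i.e. $G_\pm(z) = -s\cdot (M_1^\pm F)(s)$ (lower matched with lower, upper with upper). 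Both identities are initially valid on a half-plane in $z$, and then persist under analytic continuation to a neighbourhood of $z=1$, that is $s=0$.

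It remains to read off the finite part at $s=0$. Since multiplication by $s$ lowers the Laurent index by one, $\Pf_{s=0}\bigl(-s\,(M_1^\pm F)(s)\bigr) = -\Res_{s=0}(M_1^\pm F)$, so only the simple residue (the coefficient of $s^{-1}$) of the Mellin transform of $F$ contributes. By the explicit principal-part formulas for the Mellin transform recalled above, this residue is governed by the coefficient of $x^0\log^0 x$ in the relevant expansion of $F$: from $\int_0^1 x^{s-1}\,dx = 1/s$ one gets $\Res_{s=0}(M_1^- F) = \LIM_{x\to 0} F(x)$, while from $\int_1^\infty x^{s-1}\,dx = -1/s$ one gets $\Res_{s=0}(M_1^+ F) = -\LIM_{x\to\infty} F(x)$. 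Hence $\Pf_{z=1} G_-(z) = -\LIM_{x\to 0}F(x)$ and $\Pf_{z=1} G_+(z) = \LIM_{x\to\infty}F(x)$, and adding gives
\[
  \Pf_{z=1}\sM f(z) = \LIM_{x\to\infty}F(x) - \LIM_{x\to 0}F(x) = \regint_0^\infty f(x)\,dx,
\]
which is exactly the required first equality.

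The step requiring the most care is the presence of logarithmic terms in the expansions of $F$, which make $M_1^\pm F$ (and hence $G_\pm$) have poles of order $>1$ at $s=0$, so that $G_\pm$ is genuinely singular at $z=1$ — consistent with $\sM f$ carrying higher-order poles there. One must verify that, after multiplying by $-s$ and extracting the constant Laurent coefficient, every higher-order pole feeds only into the principal part of $G_\pm$ and never into its finite part, so that precisely the $\LIM$ value survives; this is the crux and is exactly what the cited principal-part formulas encode. The remaining points — justifying the integration by parts and the vanishing of the boundary terms in the appropriate half-planes, and invoking analytic continuation to transport the identities to $s=0$ — are routine given that $F$ is itself regular with complete asymptotics. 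As a consistency check one notes that for a pure monomial $x^\ga\log^k x$ both sides vanish, in agreement with \Eqref{eq.regzero}.
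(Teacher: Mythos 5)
Your proof is correct. A point of order first: the paper itself gives no proof of this proposition --- it is quoted from \cite[Prop.~2.1.7]{Les:OFT} --- so there is nothing in-paper to compare line by line; your proposal supplies a self-contained derivation from precisely the ingredients the paper recalls (meromorphy of $M_c^{\pm}$ and the Watson-type principal-part formulas). Your route --- integrate by parts against $F(x)=\int_1^x f(t)\,dt$ so that $(M_1^{\mp}f)(z)=-s\,(M_1^{\mp}F)(s)$ with $s=z-1$, note that multiplication by $-s$ turns the finite part into minus the residue, and compute $\Res_{s=0}(M_1^-F)=\LIM_{x\to 0}F$, $\Res_{s=0}(M_1^+F)=-\LIM_{x\to\infty}F$ --- ties the Mellin side directly to the paper's definition $\regint_0^\infty f\,dx=\LIM_{x\to\infty}F-\LIM_{x\to 0}F$, whereas the standard argument (presumably that of the cited reference) subtracts the finitely many divergent monomials $x^{\ga}\log^k x$ at $0$ and $\infty$, checks the identity monomial by monomial, and handles the integrable remainder trivially. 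Your approach has the pleasant feature that $\log$-terms require no case distinction: they produce only poles of order $\geq 2$, which die under residue extraction, exactly as you observe. The two steps you flag as routine do need their one-line justifications, and they are the ones you indicate: (i) the boundary terms $x^{z-1}F(x)$ vanish at $0$ (resp.\ $\infty$) in a suitable half-plane because $F$, being regular with complete asymptotics, has at worst power-log behavior there, and both sides of each identity then agree by meromorphic continuation; (ii) the residue formulas follow from $\int_0^1 x^{s-1}\log^k x\,dx=(-1)^k k!\,s^{-k-1}$ and $\int_1^\infty x^{s-1}\log^k x\,dx=(-1)^{k+1} k!\,s^{-k-1}$ after splitting off a finite partial expansion of $F$, the remainder contributing a function holomorphic near $s=0$. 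With those lines made explicit, the argument is complete.
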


With this we have the formula, valid for all $z\in \C$,
\begin{equation}\label{eq.Pf.regint}  
     (\Pf \sM f)(z) = \regint_0^\infty x^{z-1} f(x) dx.
\end{equation} 

\subsubsection{Example} 
We illustrate the concepts introduced before for the derivatives of the
$\Gamma$--function. This will also be needed later. For future reference
we record the following facts about $\Gamma$:
\begin{align}
    \Gamma(s) &= \frac 1s -\gamma + O(s), &\frac 1{\Gamma(s)} = s
    +\gamma\cdot s^2+O(s^3),\text{ as } s\to 0,\label{eq.Gamma.facts.1} \\
    \Res_{-n}\Gamma & = \frac{(-1)^n}{n!}, & (\Pf \Gamma)(-n) =
    (\Res_{-n}\Gamma)\cdot \bl L_n -\gamma \br. 
    \label{eq.Gamma.facts.2} 
\end{align}
Here, $L_n:=\sum\limits_{j=1}^n \frac 1j$ and 
$\gamma:=\lim\limits_{n\to\infty} 
\Bl\sum\limits_{j=1}^n \frac 1j - \log
n\Br$ denotes the Euler-Mascheroni constant. 
Recall from Section \ref{Subsection-Notation} that $\Res_\xi f$ denotes the
residue of the analytic function $f$ about the isolated singularity $\xi$.
For $n\in \Z_+$, $n>0$, 
\begin{align}
\frac{1}{\Gamma(n+s)} &=\frac{1}{(n-1)!}+ 
\frac{1}{(n-1)!}\cdot(\gamma-L_{n-1})s + O(s^2),
	   \qquad\text{as}\qquad s\to 0,  \label{eq.Gamma.facts.3}\\
\frac{1}{\Gamma(-n+s)} &= (-1)^n n! \cdot s+(-1)^n n! \cdot(\gamma-L_n)s^2 + 
O(s^3),
\qquad\text{as}\qquad s\to 0. \label{eq.Gamma.facts.4}
\end{align}

We have
\begin{equation}\label{eq.Pf.deriv.Gamma} 
    (\Pf \pl^k \Gamma) (\ga ) = \regint_0^\infty x^{\ga -1}\, \log^k 
    x\,e^{-x} dx,
\end{equation} 
for all $\ga\in\C$. Clearly, the equality is valid for $\Re \ga>0$ as an
ordinary integral. Hence, by analytic continuation and \Eqref{eq.Pf.regint} it
is valid for all $\ga$. The change of variables formula will be used to show 
the next lemma.
\begin{lemma}\label{p.Gamma} 
  For $\ga,z\in\C$ with $\Re z>0$ we have
\begin{align*}
  z^\ga&\cdot \regint_0^\infty x^{\ga -1 }\, \log^k x\, e^{-xz} dx  \\
    &=  \regint_0^\infty x^{\ga -1 } \log^k( x/z) e^{-x} dx 
      +\delta_{-\ga,\Z_+} \frac{(-1)^{k+1} (\Res_{\ga} \Gamma )\log^{k+1} 
      z}{k+1} \\
    &= \sum_{j=0}^k  {k\choose j}(-1)^j \, \bl\Pf \pl^{k-j}\Gamma\br(\ga) \cdot \log^j z 
         +\delta_{-\ga,\Z_+} \frac{(-1)^{k+1} (\Res_{\ga} \Gamma )\log^{k+1} 
         z}{k+1},
\intertext{where}
     \delta_{-\ga,\Z_+} & = \begin{cases} 1,&\text{ if } -\ga\in\Z_+,\\
                                          0,&\text{ otherwise}.
                          \end{cases}
\end{align*}
\end{lemma}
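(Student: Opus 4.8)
The plan is to establish the two asserted equalities separately, since the second is essentially a bookkeeping step while the first carries the analytic content. For the second equality I would expand $\log^k(x/z) = (\log x - \log z)^k = \sum_{j=0}^k {k\choose j}(-\log z)^j (\log x)^{k-j}$ by the binomial theorem, pull the $z$-independent scalar factors out of the regularized integral by linearity, and recognize each remaining factor $\regint_0^\infty x^{\ga-1}(\log x)^{k-j} e^{-x}\,dx$ as $\bl\Pf \pl^{k-j}\Gamma\br(\ga)$ via \Eqref{eq.Pf.deriv.Gamma}. This immediately yields the claimed sum $\sum_{j=0}^k {k\choose j}(-1)^j \bl\Pf \pl^{k-j}\Gamma\br(\ga)\log^j z$, with no regularization subtleties beyond linearity.

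For the first equality the key tool is the dilation formula \Eqref{eq.subst}. I would set $G(x):= x^{\ga-1}\log^k(x/z)\,e^{-x}$, which is a regular function with complete asymptotics on $\R_+$; the hypothesis $\Re z>0$ ensures, on the one hand, that the left-hand integrand $x^{\ga-1}\log^k x\,e^{-xz}$ decays rapidly at infinity so its regularized integral is defined, and on the other hand that $z^\ga$ and $\log z$ are unambiguous via the principal branch. A direct computation gives $z\cdot G(zu) = z^\ga\, u^{\ga-1}\log^k u\,e^{-zu}$, so the left-hand side of the lemma is exactly $z\regint_0^\infty G(zu)\,du$. Applying \Eqref{eq.subst} with $\gl = z$ then produces the main term $\regint_0^\infty G(x)\,dx = \regint_0^\infty x^{\ga-1}\log^k(x/z)\,e^{-x}\,dx$ of the first equality, plus the correction $\sum_{m\ge 0}\frac{a_{-1,m}^\infty(G)-a_{-1,m}^0(G)}{m+1}\log^{m+1}z$.

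It then remains to identify this correction with the $\delta$-term. Since $G$ decays faster than any power as $x\to\infty$, all coefficients $a_{-1,m}^\infty(G)$ vanish. As $x\to 0$, inserting the Taylor series of $e^{-x}$ together with the binomial expansion of $\log^k(x/z)$ shows that the coefficient of $x^{-1}(\log x)^m$ is nonzero precisely when $-\ga = n\in\Z_+$ (so that the power $x^{\ga-1+n}=x^{-1}$ is attainable), which is exactly the origin of $\delta_{-\ga,\Z_+}$; in that case $a_{-1,m}^0(G) = {k\choose m}(-\log z)^{k-m}\,\Res_\ga\Gamma$ by \Eqref{eq.Gamma.facts.2}. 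The delicate point, which I expect to be the main obstacle, is that these coefficients are themselves $z$-dependent through the factor $(-\log z)^{k-m}$, so the correction is not manifestly a single power of $\log z$; one must first combine $(-\log z)^{k-m}\log^{m+1}z = (-1)^{k-m}\log^{k+1}z$ and then collapse the $m$-sum using the combinatorial identity $\sum_{m=0}^k{k\choose m}\frac{(-1)^m}{m+1} = \frac{1}{k+1}$ (which follows from $\frac{1}{m+1}{k\choose m} = \frac{1}{k+1}{k+1\choose m+1}$ and the binomial theorem). After this collapse the whole correction becomes $\delta_{-\ga,\Z_+}\frac{(-1)^{k+1}(\Res_\ga\Gamma)\log^{k+1}z}{k+1}$, which completes the first equality and hence the lemma.
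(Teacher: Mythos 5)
Your proof is correct and follows essentially the same route as the paper: both apply the dilation formula \Eqref{eq.subst} (the paper to $f(x)=(x/z)^{\ga-1}\log^k(x/z)e^{-x}$, you to the trivially rescaled $G(x)=x^{\ga-1}\log^k(x/z)e^{-x}$, differing only by the constant factor $z^{1-\ga}$), identify the correction term via the Taylor expansion of $e^{-x}$ and exponential decay at infinity, and collapse the resulting sum with the same combinatorial identity \eqref{eq.combinatorial}, while the second equality is in both cases the binomial expansion combined with \Eqref{eq.Pf.deriv.Gamma}.
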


In the proof we will need the combinatorial formula
\begin{equation}\label{eq.combinatorial} 
 \sum_{j=0}^k {k \choose j} \frac 1{j+1} (-a)^{k-j} a^{j+1} = \frac{(-1)^k a^{k+1}}{k+1}
\end{equation} 
which follows by noting that ${k\choose j}\frac{1}{j+1} = \frac 1{k+1} {k+1 \choose j+1}$
and applying the binomial theorem.

\begin{proof}
We apply the change of variables formula \Eqref{eq.subst} to the function 
$f(x) = (x/z)^{\ga-1} \log^k(x/z) e^{-x}$. Certainly this is a regular function
with complete asymptotics. Since $\Re z>0$ it decays exponentially as 
$x\to \infty$ and therefore we have $a_{\ga k}^\infty=0$. It remains to find the
terms $x^{-1} \log^k x$ in the asymptotic expansion as $x\to 0$, or in the
notation of \Eqref{eq.gen.expansion}, $a_{-1,k}^0$. The expansion as $x\to 0$ 
is
obtained from the Taylor expansion of $e^{-x}$. Thus if $-\ga\not\in\Z_+$ then
$a_{-1,k}^0=0$ as well. If $-\ga\in\Z_+$ then we find for the terms of the form
$x^{-1}\log^j x$ as $x\to 0$
\begin{align*}
   &(x/z)^{\ga-1} \sum_{j=0}^k {k\choose j} \bl - \log z\br^{k-j} \log^j x
   \frac{(-1)^{\ga}}{(-\ga)!} x^{-\ga} = \\
   &  x^{-1} z^{1-\ga}(\Res_{\ga}\Gamma) \sum_{j=0}^k {k\choose j} (-\log 
   z)^{k-j}
   \log^j x.
\end{align*}
Thus the correction term in the formula \eqref{eq.subst} equals
\begin{equation*} 
    (\Res_{\ga} \Gamma) z^{1-\ga} \frac{(-1)^{k+1} \log^{k+1} z}{k+1},
\end{equation*} 
where we have used the \Eqref{eq.Pf.deriv.Gamma} and 
\eqref{eq.combinatorial}.
The rest is now straightforward.
\end{proof}

\section{Operators with meromorphic $\zeta$-function}
\label{s.OMZ}

\subsection{Set up}
\label{Subsection-Setup}
For the trace ideals in the algebra of bounded operators on a separated 
Hilbert space we refer to \cite{Sim2005} as a standard reference.

We now describe the class of operators we are going to consider in the 
remainder
of this paper. Let $\calH$ be a separable complex Hilbert space and let 
$L:\sD(L)\subset \calH \to \calH$ be a self-adjoint operator, which is bounded
below. It is not a big loss of generality to assume for convenience that
\begin{equation}\label{eq-BoundedBelow}
  \langle L u , u \rangle \geq 0,\; u\in \dom(L)\subset \calH.
\end{equation}
We consider the
heat semigroup $e^{-tL}$ associated to $L$ for any $t \geq 0$.  Since $L  = 
L^\ast\geq 0$ we  have that $\spec L$ is contained in the 
nonnegative real axis.
We assume additionally that 
\begin{equation}\label{eq-Schatten}
  (L+I)^{-1}\in \B^{p}(\calH),
\end{equation}
for some $p\geq 1$. Here, $\B^{p}(\calH)$ denotes the Schatten class of
order $p$.
The condition \eqref{eq-Schatten} implies that the sum 
\begin{equation}\label{Eq-Zetafunction}
  \zeta(s;L):=\sum_{\gl\in\spec L\setminus\{0\}} \gl^{-s},
\end{equation}
converges absolutely and locally uniformly for $\Re(s)>p$. The function
$\zeta(s;L)$ is called the {\it zeta function ($\zeta$-function)} associated 
to $L$.
The Mellin transform of $e^{-\gl t}$,
\begin{equation*}
\int_{0}^{\infty} t^{s-1}  e^{-t\cdot\gl} dt = \Gamma(s) \cdot   
\gl^{-s},\quad \gl>0, 
\end{equation*}
implies the representation
\begin{equation}\label{Eq-ZetaHeat}
\begin{split} 
\zeta(s;L) &= \frac{1}{\Gamma(s)} 
        \int_{0}^\infty t^{s-1}\cdot \bl \tr ( e^{-t L} ) - \dim \ker L\br  dt, \\
        & = \frac{1}{\Gamma(s)} 
        \regint_{0}^\infty t^{s-1}\cdot \tr( e^{-t L} )  dt,
\end{split} 
\end{equation}
for $\Re(s)>p$. The second line follows from \Eqref{eq.regzero}.  Following
Connes-Moscovici (\cf \cite[Definition II.1]{CM} and 
\cite[Definition 2.1]{Les13}) we define:
\begin{definition}\label{p.DDS}
The operator $L$ is said to have \emph{discrete dimension spectrum}, if
$\zeta(s;L)$ extends to a meromorphic function in the complex plane $\C$ such
that on finite vertical strips one has a uniform estimate 
$|\Gamma(s)\cdot \zeta(s;L)|=O(|s|^{-N})$, when $\Im(s)\to \infty$, for each
$N\in \N$. We will denote by $\Sigma(L)$ the set of poles of the function
$\Gamma(s)\cdot \zeta(s;L)$.
\end{definition}

The discrete dimension spectrum assumption is, via the formula
\eqref{Eq-ZetaHeat}, in fact equivalent to a strengthened form of a complete
asymptotic expansion of the trace of the heat semigroup. To explain this,
we first note that by \Eqref{eq-BoundedBelow} and \eqref{eq-Schatten} one has
\begin{equation}\label{eq.heat.inf} 
  \tr(e^{-tL}) \sim \dim\ker L + O(e^{-t \gl_{\min}}), 
    \quad \text{as } t\to\infty,
\end{equation}
with $\gl_{\min}>0$ denoting the smallest positive eigenvalue of $L$.
Hence the asymptotic expansion as $t\to\infty$ in the sense of
\Eqref{eq.gen.expansion} is automatic. Now let us \emph{assume} that
\begin{equation}\label{eq.heat.zero}
\tr (e^{-t L}) \sim \sum_{\ga\in\sA} \sum_{k=0}^{k_\ga} A^{H}_{\ga k} \ 
t^{\alpha} \log^k t, \quad \text{as } t\to 0+,
\end{equation}
with a discrete set $\sA\subset \C$ such that the intersection with each finite
vertical strip is finite and such that $\Re \ga, \ga\in\sA$ has only $+\infty$
as accumulation point.
The assumption \eqref{eq.heat.zero} means in other words that the heat
semigroup is a regular function of $t$ with complete asymptotics in the sense
of Section \ref{ss.RIPF}.  Furthermore, by \Eqref{Eq-ZetaHeat},
$\Gamma(s)\cdot\zeta(s;L)$ is nothing but the Mellin-transform of the trace of
the heat semigroup. It then follows from the discussion in Section
\ref{sss.MTHPF} that $\zeta(s;L)$ extends meromorphically to $\C$ with poles in
$-\sA$. In fact, from Proposition \ref{t.Pf} we infer the universally valid 
formula
\begin{equation*} 
   \Bl \Pf \bl\Gamma(\cdot)\cdot\zeta(\cdot;L)\br \Br(s)
       = \regint_0^\infty t^{s-1} \tr(e^{-tL}) dt,\quad s\in\C,
\end{equation*}
resp., for all $s\in\C\setminus (-\sA)$ we have
\begin{equation*} 
   \zeta(s;L)
       = \frac{1}{\Gamma(s)}\regint_0^\infty t^{s-1} \tr(e^{-tL}) dt.
\end{equation*}
As a word of warning, we emphasize that one cannot expect this to hold for the
finite parts in the poles, \cf the discussion in the next section below.

For $\ga\in\sA$ the principal part of $\zeta(s;L)$
about $-\ga$ is determined by
\begin{equation}\label{eq.zeta.1}
	\frac{1}{\Gamma(s)}\sum_{k=0}^{k_\alpha} (-1)^k k!\ 
	\frac{A^{H'}_{\alpha k}}{(s+\alpha)^{k+1}},
\end{equation}
where (see \eg \cite[Sec.~2]{BL99}).
\begin{equation}\label{eq.AH}
	A^{H'}_{\alpha k} := 
\left\{ 
	\begin{aligned}
		&A^{H}_{\alpha k},   &(\ga,k)\not = (0,0),\\
	&A^{H'}_{\alpha k} = A^H_{00} - \dim \ker L, &(\ga,k) = (0,0).
	\end{aligned}
\right.
\end{equation}
As a consequence, if $\alpha \not \in \Z_+$ the order of 
the pole $-\alpha$ is $k_\alpha+1$ with residue
\begin{equation}\label{eq.zeta.2}
{\rm Res}^{k_\alpha+1}_{-\ga} \zeta(\cdot;L) = (-1)^{k(\alpha)} 
\frac{A^H_{\alpha 
k_\alpha} (k_\alpha)!}{\Gamma(-\alpha)},
\end{equation}
and if $\alpha \in \Z_+$ then, since $\Gamma(s)^{-1}$ has a simple zero at
$-\alpha$, the order of the pole $-\alpha$ is $k_\alpha$ with residue
\begin{equation}\label{eq.zeta.3}
    \Res^{k_\alpha}_{-\alpha} \zeta(\cdot;L) = (-1)^{k_\alpha+\alpha} 
\alpha! (k_\alpha)!A^{H'}_{\alpha k_\alpha}.
\end{equation}
Recall that $\Res^k_a f$ denotes the coefficient of $(s-a)^{-k}$ in the 
Laurent
expansion of $f$ about a, see Section \ref{Subsection-Notation}. 

While the complete asymptotic expansion implies that $\zeta(s;L)$ has
a meromorphic extension to the whole plane, in order to obtain the expansion
from the meromorphic $\zeta$-function, one needs the finite vertical strip
decay assumption in order to be able to shift vertical integration contours
from the inverse Mellin transform (\cf \cite[Sec.~2]{BL99}). This condition,
being slightly stronger than just meromorphicity, is reflected in a stronger
asymptotic expansion condition. The result, being a standard application
of complex analysis, reads as follows (see \eg \cite[Lemma  2.2]{BL99} and
the references therein).
\begin{proposition}\label{Prop-DDS-HAE} Let $L$ be a bounded below
discrete operator in a Hilbert space with resolvent of $p$-Schatten class.
The operator $L$ has discrete dimension spectrum if, and only if, 
$\tr(e^{-t L})$ has an asymptotic expansion 
\eqref{eq.heat.zero} that can be differentiated, \ie for $N\in \Z_+$, 
$K>0$, 
we have
\begin{equation}\label{Eq-Heat-Asymptotic-Exp-Prop-1}
\left| \b_t^N \Bl \tr(e^{-t L}) - \sum_{\Re \alpha\leq 
N+K}\sum_{k=0}^{k_\alpha} A^{H}_{\alpha k}\ t^{\alpha}\log^k t 
\Br\right|\leq C_{N,K} t^{K}, \qquad \; {\text{as}}\qquad t\to 
0+.
\end{equation}
\end{proposition}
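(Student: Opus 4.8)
The plan is to use the identity, recorded in \eqref{Eq-ZetaHeat}, that $\Gamma(s)\cdot\zeta(s;L)$ is the Mellin transform of $\tr(e^{-tL})$, and to run the classical Mellin correspondence between small-$t$ asymptotics and meromorphic continuation with vertical decay (as in \cite[Lemma~2.2]{BL99}). I would establish the two implications separately: the inverse Mellin transform with contour shifting for ``discrete dimension spectrum $\Rightarrow$ differentiable expansion'', and the forward Mellin transform with repeated integration by parts for the converse.

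For the forward implication I would begin from the inverse Mellin representation
\begin{equation*}
  \tr(e^{-tL}) = \frac{1}{2\pi i}\int_{\Re s = c} \Gamma(s)\,\zeta(s;L)\, t^{-s}\, ds,\qquad c>p,
\end{equation*}
which is legitimate because the integrand is absolutely integrable on the line $\Re s=c$ thanks to the decay $|\Gamma(s)\zeta(s;L)|=O(|s|^{-M})$, valid for every $M\in\N$. I would then shift the contour to $\Re s = -(N+K)$, chosen to avoid poles, collecting the residues of $\Gamma(s)\zeta(s;L)\,t^{-s}$ at the finitely many poles $s=-\ga$ with $\Re\ga\le N+K$. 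A pole of order $k_\ga+1$ at $-\ga$ contributes exactly the terms $\sum_k A^H_{\ga k}\,t^{\ga}\log^k t$ of \eqref{eq.heat.zero}, by the explicit formulas for the principal part of $\sM f$ recalled in Section \ref{sss.MTHPF}. The horizontal segments vanish as $\Im s\to\pm\infty$ because $|s|^{-M}\to 0$ while $t^{-\Re s}$ stays bounded on the compact range of $\Re s$, and the remaining integral over $\Re s=-(N+K)$ is $O(t^{N+K})$. For the differentiated estimate \eqref{Eq-Heat-Asymptotic-Exp-Prop-1} I would differentiate under the integral sign: $\partial_t^N t^{-s}$ is a degree-$N$ polynomial in $s$ times $t^{-s-N}$, and the rapid decay absorbs the polynomial factor, leaving a remainder that is $O(t^{K})$ on the shifted contour.

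For the converse I would split the Mellin transform at $t=1$ as in Section \ref{sss.MTHPF}. By \eqref{eq.heat.inf} the tail $\int_1^\infty t^{s-1}\tr(e^{-tL})\,dt$ is holomorphic apart from the elementary pole at $s=0$ produced by the constant $\dim\ker L$, and is rapidly decreasing in $\Im s$, so it is harmless. Inserting the expansion \eqref{eq.heat.zero} into $\int_0^1 t^{s-1}\tr(e^{-tL})\,dt$, each monomial integrates as $\int_0^1 t^{s+\ga-1}\log^k t\,dt=(-1)^k k!/(s+\ga)^{k+1}$, producing precisely the principal parts at $s=-\ga$, while the remainder after subtracting finitely many terms is holomorphic on a half-plane whose left boundary recedes to $-\infty$ as more terms are removed; this delivers the meromorphic continuation to all of $\C$. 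To get the vertical decay I would integrate $\int_0^1 t^{s-1}\tr(e^{-tL})\,dt$ by parts $N$ times, each step gaining a factor of order $|s|^{-1}$; the boundary terms and the final integral are controlled by the differentiated estimate \eqref{Eq-Heat-Asymptotic-Exp-Prop-1}, which is exactly what makes $\partial_t^N\tr(e^{-tL})$ integrable against $t^{s+N-1}$ with a uniform bound, yielding $|\Gamma(s)\zeta(s;L)|=O(|s|^{-N})$ on each vertical strip.

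The main obstacle is precisely this equivalence between decay in $\Im s$ and differentiability of the expansion. Mere meromorphicity of $\zeta(s;L)$ corresponds only to the existence of the undifferentiated expansion; it is the uniform polynomial-order decay in the imaginary direction---equivalently, the possibility of performing arbitrarily many integrations by parts with controlled remainders---that upgrades this to the \emph{differentiable} expansion \eqref{Eq-Heat-Asymptotic-Exp-Prop-1}, and conversely. The remaining bookkeeping, namely matching each pole order $k_\ga+1$ to the highest power $\log^{k_\ga}t$ and tracking the receding half-plane of holomorphy, is routine given the Watson-type formulas for $\sM f$ in Section \ref{sss.MTHPF}.
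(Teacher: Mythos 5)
Your proposal is correct and takes essentially the same route the paper does, namely the standard Mellin correspondence it delegates to \cite[Lemma 2.2]{BL99}: inverse Mellin transform with contour shifting (using the vertical decay to kill the horizontal segments and bound the shifted integral) for one implication, and term-by-term Mellin transform plus repeated integration by parts, keyed to the equivalence between polynomial decay in $\Im s$ and differentiability of the expansion, for the converse. The only bookkeeping slip is that by \eqref{Eq-ZetaHeat} the function $\Gamma(s)\zeta(s;L)$ is the Mellin transform of $\tr(e^{-tL})-\dim\ker L$ rather than of $\tr(e^{-tL})$ itself, so your inverse Mellin representation recovers the heat trace only up to the constant $\dim\ker L$, which is harmlessly absorbed into the coefficient of $t^0\log^0 t$ in accordance with \eqref{eq.AH}.
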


From the asymptotic expansion of the trace of the heat semigroup one can 
derive various other trace expansions. Well-known is the relation to
the resolvent expansion which we will also briefly review in Section
\ref{Sub-sec-Heat-Resolvent}
below. Less well-known is probably the expansion of the 
\emph{zeta determinant ($\zeta$-determinant)}
which we are going to explain first.

\subsection{The $\zeta$-determinant and its expansion
in terms of the resolvent parameter}
\label{ss.ZDER}

\subsubsection{Standing assumptions}\label{sss.ZDER.SA}
During this subsection, unless otherwise said, $L$ denotes a discrete
bounded below (\Eqref{eq-BoundedBelow})  self-adjoint operator in the Hilbert 
space $\calH$ whose
resolvent lies in the $p^{th}$-Schatten class.  Furthermore, we assume
\Eqref{eq.heat.zero} to hold and from now on we write $A_{\alpha 
		k}^H (L)$ to indicate the dependence of the operator $L$ on the heat 
		coefficients; recall that \Eqref{eq.heat.inf} is automatic.
By \Eqref{eq.zeta.3}, $0$ is a pole of $\zeta(s;L)$ of order $k_0 := k(0)$.  
In this
section, we would like to ensure that $\zeta(s;L+z)$ is regular at $s=0$
for all $\Re z>0$ and therefore in this section we assume in addition
that $k_{-n}=0$ for all $n\in\Z_+$.  In view of 
\Eqref{eq.zeta.1} and
\eqref{eq.zeta.2} this is equivalent to $A_{-n,k}^H(L)=0$ for 
all $n\in\Z_+$ and all $k>0$. In terms of $\zeta(s;L)$ this means
that $\zeta(s;L)$ is regular at $0$ and that in the positive integers
there are at most poles of at most order $1$.

\subsubsection{The $\zeta$-determinant of $L+z$}

Under our standing assumptions $\zeta(s;L)$ is regular at $s=0$.

\begin{definition} The $\zeta$-regularized determinant of $L$ is defined by
\[
	\det_{\zeta}(L):=\exp(-\zeta'(0;L)).
\]
\end{definition}

Our first result relates the $\zeta$--determinant of $L$ to the heat trace.

\begin{proposition}\label{Prop-Zetadeterminant} 
	Let $L$ satisfy the Standing assumptions \ref{sss.ZDER.SA}. 
Then $\zeta(s;L)$ is regular at $s=0$ and one has 
\begin{align}
     \zeta(0;L) & = A_{00}^H(L) - \dim \ker 
     L,\label{eq.zeta.det.asymp.heat.exp.1}\\
     -\log\detz(L) &= \gamma \cdot \Bl A_{00}^H(L)-\dim \ker L \Br + 
     \regint_0^\infty t\ii 
         \tr( e^{-t L} )dt.\label{eq.zeta.det.asymp.heat.exp.2}
\end{align}
Here, $\gamma$ denotes the Euler-Mascheroni constant 
(see \Eqref{eq.Gamma.facts.1}, \eqref{eq.Gamma.facts.2}) 
and $A_{00}^H(L)$ denotes the coefficient of $t^0 \log^0 t$ in the
asymptotic expansion of $\tr(e^{-tL} )$ as $t\to 0+$, see 
\Eqref{eq.heat.zero} 
and \eqref{eq.AH}.
\end{proposition}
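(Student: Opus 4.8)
The plan is to reduce both identities to the Laurent expansion at $s=0$ of the Mellin transform $\sM f$ of the heat trace $f(t):=\tr(e^{-tL})$. Under the Standing assumptions \ref{sss.ZDER.SA}, $f$ is a regular function with complete asymptotics: the small-$t$ expansion is \eqref{eq.heat.zero} and the large-$t$ behaviour \eqref{eq.heat.inf} is automatic, so $\sM f$ is defined and meromorphic, and by \eqref{Eq-ZetaHeat} (together with the remark that the two lines there agree via \eqref{eq.regzero}) one has $\sM f(s)=\Gamma(s)\,\zeta(s;L)$ as meromorphic functions on $\C$. Since $\zeta(\cdot;L)$ is regular at $0$ and $\Gamma$ has a simple pole there, $\sM f$ has at most a simple pole at $0$, and I would write its Laurent expansion as
\begin{equation*}
 \sM f(s) = \frac{r_{-1}}{s} + r_0 + O(s), \qquad s\to 0,
\end{equation*}
so that $r_{-1}=\Res_0 \sM f$ and $r_0=(\Pf\sM f)(0)$.

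Next I would combine this with \eqref{eq.Gamma.facts.1}, which gives $\tfrac{1}{\Gamma(s)}=s+\gamma\, s^2+O(s^3)$. Multiplying the two expansions,
\begin{equation*}
 \zeta(s;L)=\sM f(s)\cdot\frac{1}{\Gamma(s)} = r_{-1} + \bl\gamma\, r_{-1}+r_0\br\,s + O(s^2),
\end{equation*}
and reading off the constant and linear coefficients yields at once
\begin{equation*}
 \zeta(0;L)=r_{-1}, \qquad \zeta'(0;L)=\gamma\, r_{-1}+r_0 .
\end{equation*}
In particular $\zeta(\cdot;L)$ is manifestly regular at $0$, as asserted.

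It then remains to identify $r_{-1}$ and $r_0$ in heat-trace terms. For $r_{-1}=\Res_0\sM f$ I would use the pole analysis of Section \ref{sss.MTHPF}: the small-$t$ term $A^H_{00}\,t^0\log^0 t$ contributes $A^H_{00}/s$ to $\sM f$, while the large-$t$ constant $\dim\ker L$ coming from \eqref{eq.heat.inf} contributes $-\dim\ker L/s$, and all remaining terms are regular at $0$; hence $r_{-1}=A^H_{00}-\dim\ker L$, which is \eqref{eq.zeta.det.asymp.heat.exp.1} (equivalently, this is \eqref{eq.zeta.3} read off at $\alpha=k_\alpha=0$). For $r_0=(\Pf\sM f)(0)$ the identity \eqref{eq.Pf.regint} gives directly $r_0=\regint_0^\infty t\ii\,\tr(e^{-tL})\,dt$. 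Substituting both into $\zeta'(0;L)=\gamma\, r_{-1}+r_0$ and recalling $-\log\detz(L)=\zeta'(0;L)$ produces \eqref{eq.zeta.det.asymp.heat.exp.2}.

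The one point demanding care — and precisely the source of the warning in the text that finite parts must not be evaluated naively — is that $\zeta(0;L)$ and $\zeta'(0;L)$ are not obtained by simply inserting $s=0$ into finite parts of $\sM f$: because $\Gamma$ is singular at $0$, the value and the derivative of $\zeta$ mix the residue $r_{-1}$ and the finite part $r_0$ of $\sM f$. In particular the Euler--Mascheroni constant $\gamma$ in \eqref{eq.zeta.det.asymp.heat.exp.2} enters \emph{solely} through the cross term $r_{-1}\,\gamma\, s^2$ arising from the second-order coefficient of $1/\Gamma(s)$, and the contribution $-\dim\ker L$ to the residue must not be dropped. Keeping track of these two effects is the only subtlety; everything else is a direct reading of Laurent coefficients.
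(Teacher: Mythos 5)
Your proof is correct and takes essentially the same route as the paper's: both expand the (regularized) Mellin transform of the heat trace about $s=0$ — residue $A_{00}^H(L)-\dim\ker L$, finite part $\regint_0^\infty t^{-1}\tr(e^{-tL})\,dt$ via \Eqref{eq.Pf.regint} — and multiply by $1/\Gamma(s)=s+\gamma s^2+O(s^3)$ to read off $\zeta(0;L)$ and $\zeta'(0;L)=-\log\detz(L)$. The only difference is cosmetic (you name the Laurent coefficients $r_{-1}, r_0$ explicitly); the substance is identical.
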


While \Eqref{eq.zeta.det.asymp.heat.exp.1} has been known for a long time, 
\Eqref{eq.zeta.det.asymp.heat.exp.2} is more subtle. At least for Dirac type 
operators,
\Eqref{eq.zeta.det.asymp.heat.exp.2} 
was already known to K. P. Wojciechowski, \cf \cite[Eq. (3.5) and 
(3.6)]{WOJ1999}. These formulas are the key to a simple derivation of the 
asymptotic
expansion of $\zeta(0;L+z)$ and $\log\detz(L+z)$ as 
$\Re z\to\infty$.
The latter will lead to yet another equivalent characterization of
the discrete dimension spectrum assumption. It is on the same footing
as the resolvent expansion.
\begin{proof} As $s\to 0$ we have for the Mellin transform
\begin{equation*}
   \regint_0^\infty t^{s-1} \tr(e^{-tL}) dt = \frac{A_{00}^H(L)-\dim\ker L}{s}
        + \regint_0^\infty t\ii \tr(e^{-tL}) dt+ O(s).
\end{equation*}
We use, as before,  $A^{H'}_{00}(L):= A_{00}^{H}(L)-\dim\ker L$. By 
Section
\ref{sss.MTHPF} and \Eqref{eq.Gamma.facts.1} we obtain as $s\to 0$
\begin{align*}
    \zeta(s;L) &= \bl s + \gamma\cdot s^2+ O(s^3) \br \cdot
          \Bl \frac{ A^{H'}_{00}(L)}{s} + \regint_0^\infty t\ii
          \tr(e^{-tL}) dt+ O(s) \Br,\\
     &= A^{H'}_{00}(L) + \Bl \gamma A^{H'}_{00}(L) + \regint_0^\infty t\ii 
     \tr(e^{-tL}) dt \Br \cdot s
     + O(s^2)
\end{align*}
and the claim follows.
\end{proof}

We will later need the behavior of $\zeta(s;L)$ near $n>0$, $n\in \Z_+$. 
Under the Standing assumptions  \ref{sss.ZDER.SA}, we apply the same 
strategy as in the proof of the previous proposition and obtain for the poles 
of $\zeta(s;L)$
\begin{equation*}
	\zeta(s;L) \sim \sum_{n=1}^{\infty} \frac{A_{-n,0}^H}{\Gamma(s)} 
	\cdot	\frac{1}{s-n} + {\rm Entire}(s).
\end{equation*}
Hence for an integer $n>0$, 
\begin{equation}\label{eq.zeta.near.n}
	\zeta(s+n;L) \sim  \frac{A^H_{-n,0}}{(n-1)!} \cdot \frac{1}{s} + 
	\Pf_{s=n} 
	\zeta(s;L)+O(s) , \qquad\text{as}\qquad s\to0.	
\end{equation}

\begin{lemma}\label{p.zetanull}
Let $L$ satisfy the Standing assumptions \ref{sss.ZDER.SA}. 
Then we have for $\Re z>0$
\[
  \zeta(0;L+z)=A_{00}^H(L+z) = \sum_{n=0}^\infty (\Res_{-n}\Gamma)\cdot
  A_{-n,0}^H(L)\cdot z^n,
\]
Furthermore, the $\zeta$-function of $L+z$ is also regular at $0$ and
it has at most simple poles in the positive integers. In other words,
for all $\Re z>0$ the operator $L+z$ satisfies the Standing assumptions
\ref{sss.ZDER.SA}.
\end{lemma}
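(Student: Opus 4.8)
The plan is to reduce everything to the elementary identity
\[
  \tr\!\left(e^{-t(L+z)}\right)=e^{-tz}\,\tr\!\left(e^{-tL}\right),
\]
which holds since $L+z$ has spectrum $\{\lambda+z : \lambda\in\spec L\}$. Because $\Re z>0$ we have $\Re(\lambda+z)>0$ for every $\lambda\in\spec L$, so $\ker(L+z)=\{0\}$ and, moreover, $e^{-tz}\tr(e^{-tL})$ decays exponentially as $t\to\infty$; thus $t\mapsto\tr(e^{-t(L+z)})$ is again a regular function with complete asymptotics on $\R_+$ in the sense of Section \ref{ss.RIPF}. Consequently the machinery of Section \ref{sss.MTHPF} and Proposition \ref{Prop-Zetadeterminant} applies with the heat trace of $L$ replaced by that of $L+z$: one defines $\zeta(s;L+z)$ as the Mellin transform of $e^{-tz}\tr(e^{-tL})$ divided by $\Gamma(s)$, and none of that formalism uses self-adjointness of $L+z$.

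First I would compute the short-time expansion of $\tr(e^{-t(L+z)})$ by multiplying the expansion \eqref{eq.heat.zero} of $\tr(e^{-tL})$ by the entire Taylor series $e^{-tz}=\sum_{n\ge 0}\frac{(-z)^n}{n!}t^n$. This produces a complete asymptotic expansion indexed by $\{\alpha+n : \alpha\in\sA,\ n\in\Z_+\}$, which is again discrete with $+\infty$ the only accumulation point of the real parts. The coefficient $A_{00}^H(L+z)$ of $t^0\log^0 t$ collects exactly the contributions from $t^n\cdot t^{\alpha}\log^k t$ with $\alpha+n=0$ and total $\log$-power $0$, i.e.\ $\alpha=-n$ and $k=0$; since $\alpha=-n$ is a non-positive integer, the Standing assumptions \ref{sss.ZDER.SA} force $k_{-n}=0$, so the only relevant coefficient is $A_{-n,0}^H(L)$. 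Hence
\[
  A_{00}^H(L+z)=\sum_{n=0}^\infty \frac{(-z)^n}{n!}\,A_{-n,0}^H(L)
  =\sum_{n=0}^\infty (\Res_{-n}\Gamma)\,A_{-n,0}^H(L)\,z^n,
\]
where the last equality is just $\Res_{-n}\Gamma=\frac{(-1)^n}{n!}$ from \eqref{eq.Gamma.facts.2}. Since $\dim\ker(L+z)=0$, formula \eqref{eq.zeta.det.asymp.heat.exp.1} of Proposition \ref{Prop-Zetadeterminant}---whose proof uses only the Mellin/regularized-integral calculus and hence applies verbatim to $L+z$---gives $\zeta(0;L+z)=A_{00}^H(L+z)$, which is the asserted formula; the growth control built into the discrete dimension spectrum assumption ensures convergence of the series for each such $z$.

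It remains to verify that $L+z$ again satisfies the Standing assumptions \ref{sss.ZDER.SA}. Regularity of $\zeta(s;L+z)$ at $s=0$ and the simple-pole condition at the positive integers follow from the same bookkeeping: for $N\in\Z_+$, a term $t^{-N}\log^k t$ in the product expansion can only arise from $A_{\alpha,k}^H(L)\,t^{\alpha}\log^k t$ times $\frac{(-z)^m}{m!}t^m$ with $\alpha=-N-m\in-\Z_+$, and all such terms have $k=0$ by the Standing assumptions on $L$; thus $k_{-N}(L+z)=0$ for every $N\in\Z_+$, which by \eqref{eq.zeta.1}--\eqref{eq.zeta.3} is precisely the condition guaranteeing regularity at $0$ and at most simple poles at the positive integers. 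Finally, the discrete dimension spectrum property is inherited from $L$ via Proposition \ref{Prop-DDS-HAE}: multiplying the differentiable expansion of $\tr(e^{-tL})$ by the smooth, exponentially decaying factor $e^{-tz}$ preserves the differentiable short-time estimate \eqref{Eq-Heat-Asymptotic-Exp-Prop-1}. I expect the only genuinely delicate point to be the conceptual one flagged above---that for complex $z$ the operator $L+z$ is not self-adjoint---so one must define $\zeta(s;L+z)$ through the heat trace $e^{-tz}\tr(e^{-tL})$ and check that every invoked result depends only on the regular-function-with-complete-asymptotics structure rather than on self-adjointness.
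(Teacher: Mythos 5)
Your proposal is correct and follows essentially the same route as the paper: factor $\tr(e^{-t(L+z)})=e^{-tz}\tr(e^{-tL})$, multiply the Taylor series of $e^{-tz}$ into the heat expansion \eqref{eq.heat.zero}, read off $A_{00}^H(L+z)=\sum_n(\Res_{-n}\Gamma)A_{-n,0}^H(L)z^n$, and observe that the vanishing of $A_{\ga k}^H(L)$ for $-\ga\in\Z_+$, $k>0$ is inherited by $L+z$. Your extra care in invoking \Eqref{eq.zeta.det.asymp.heat.exp.1} for $L+z$ (with trivial kernel) and in noting that the Mellin-transform formalism does not require self-adjointness only makes explicit what the paper leaves implicit; the one cosmetic slip is attributing the convergence of the series to the dimension-spectrum growth control, whereas the sum is in fact finite since only finitely many $\ga\in\sA$ have $\Re\ga\le 0$.
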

Note that the previous sum is finite.

\begin{proof} Let $\Re z>0$. Then $\tr(e^{-t(L+z)}) = O( e^{-tz})$, as $t\to\infty$.
As $t\to 0+$ we have
\begin{align*}
  \tr(e^{-t(L+z)}) &= e^{-tz} \cdot \tr(e^{-tL})\\
    & \sim \sum_{n=0}^\infty \frac{(-z)^n}{n!} t^n \cdot 
       \sum_{\ga , k} A_{\ga k}^H(L) \cdot t^\ga \log^k t\\
    & \sim \sum_{\ga} \sum_{n=0}^\infty \sum_{k} (\Res_{-n}\Gamma)\cdot 
        A_{\ga-n, k}^H(L)\cdot z^n\cdot t^{\ga} \log^k t.
\end{align*}
This gives in fact the general formula
\[
    A_{\ga  k}^H(L+z) =  \sum_{n=0}^\infty (\Res_{-n}\Gamma)\cdot 
    A_{\ga-n,k}^H(L)\cdot z^n.
\]
From this we read off the claim about the constant term as well as the fact
that since $A_{\ga k}^H(L)=0$ for $-\ga\in\Z_+, k>0$, this is also true for
the $A_{\ga k}^H(L+z)$.
\end{proof}

\begin{lemma}\label{p.regint.expand}
Let $L$ satisfy the Standing assumptions \ref{sss.ZDER.SA}. 
Then as $\Re z\to\infty$
\[
  \regint_0^\infty t\ii \tr (e^{-t(L+z)}) dt
   \sim \sum_{\ga,k} A_{\ga k}^H(L) \regint_0^\infty e^{-tz} \cdot t^{\ga -1} 
   \log^k t \, dt
     +O( e^{-\Re z} ).
\]
\end{lemma}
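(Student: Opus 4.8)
The plan is to reconstruct the $\Re z\to\infty$ asymptotics of the regularized integral directly from the small-$t$ heat expansion, the point being that multiplication by $e^{-tz}$ suppresses the large-$t$ behaviour of $\tr(e^{-tL})$ (which tends to $\dim\ker L$ and is invisible to the coefficients $A^H_{\ga k}(L)$) exponentially in $z$, while the small-$t$ expansion is integrated term by term via Lemma~\ref{p.Gamma}. First I would use $\tr(e^{-t(L+z)})=e^{-tz}\tr(e^{-tL})$ and fix an arbitrarily large $M\in\N$. By the Standing assumptions \ref{sss.ZDER.SA}, the expansion \eqref{eq.heat.zero}, and Proposition~\ref{Prop-DDS-HAE}, the heat trace splits as $\tr(e^{-tL})=P_M(t)+R_M(t)$, where $P_M(t):=\sum_{\Re\ga\le M}\sum_k A^H_{\ga k}(L)\,t^{\ga}\log^k t$ is a \emph{finite} sum (Standing assumptions) and, taking $N=0$ in \eqref{Eq-Heat-Asymptotic-Exp-Prop-1}, $R_M(t)=O(t^{M})$ as $t\to0+$. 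Both $P_M$ and hence $R_M$ are regular functions with complete asymptotics on $\R_+$, and so are $t^{-1}e^{-tz}P_M(t)$ and $t^{-1}e^{-tz}R_M(t)$, since multiplication by $e^{-tz}$ respects the small-$t$ expansion and forces exponential decay at $\infty$ for $\Re z>0$.

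Next, by linearity of the regularized integral over this class I would write
\[
\regint_0^\infty t^{-1}e^{-tz}\tr(e^{-tL})\,dt
 = \sum_{\Re\ga\le M}\sum_k A^H_{\ga k}(L)\regint_0^\infty e^{-tz}\,t^{\ga-1}\log^k t\,dt
 + \regint_0^\infty t^{-1}e^{-tz}R_M(t)\,dt ,
\]
so that the first (finite) sum is precisely the truncation of the asserted right-hand side, each term being computed by Lemma~\ref{p.Gamma}. It remains to estimate the remainder, for which I would split $\regint_0^\infty=\regint_0^1+\int_1^\infty$, noting that the tail carries no asymptotic terms (exponential decay at $\infty$), so its regularized integral is the ordinary one (Section~\ref{ss.RIPF}). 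For the tail, $\tr(e^{-tL})\to\dim\ker L$ and $P_M$ is a finite sum of $t^\ga\log^k t$, whence $|R_M(t)|$ grows at most polynomially and $\big|\int_1^\infty t^{-1}e^{-tz}R_M(t)\,dt\big|\le\int_1^\infty e^{-t\Re z}|R_M(t)|\,dt=O(e^{-\Re z})$ as $\Re z\to\infty$. For the near part, $t^{-1}R_M(t)=O(t^{M-1})$ is integrable at $0$ (for $M\ge1$), so again the regularized integral equals the ordinary one and $\big|\int_0^1 t^{-1}e^{-tz}R_M(t)\,dt\big|\le C\int_0^\infty t^{M-1}e^{-t\Re z}\,dt=C\,\Gamma(M)\,(\Re z)^{-M}=O\big((\Re z)^{-M}\big)$.

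Combining the two estimates, the remainder is $O\big((\Re z)^{-M}\big)+O(e^{-\Re z})$, which is of lower order than the last retained term $\regint_0^\infty e^{-tz}t^{\ga-1}\log^k t\,dt\sim z^{-\ga}$ with $\Re\ga\le M$ (by Lemma~\ref{p.Gamma}). Since $M$ is arbitrary and only finitely many exponents $\ga\in\sA$ satisfy $\Re\ga\le M$, this is exactly the assertion that the left-hand side is asymptotic, as $\Re z\to\infty$, to $\sum_{\ga,k}A^H_{\ga k}(L)\regint_0^\infty e^{-tz}t^{\ga-1}\log^k t\,dt$ up to the exponentially small error $O(e^{-\Re z})$. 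I expect the only genuinely delicate points to be bookkeeping: justifying the linear splitting of the regularized integral within the class of regular functions with complete asymptotics, and identifying $\regint_0^1$ of the (integrable) remainder with the ordinary integral; the analytic content — exponential suppression of the large-$t$ tail and term-by-term integration of the small-$t$ expansion — is then immediate.
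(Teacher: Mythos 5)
Your proposal is correct and follows essentially the same route as the paper's proof: both arguments rest on the exponential suppression of the $t\geq 1$ tail, truncation of the heat expansion with an $O(t^{M})$ remainder contributing $O\bigl((\Re z)^{-M}\bigr)$, and identification of each retained term with $\regint_0^\infty e^{-tz}\, t^{\ga-1}\log^k t\, dt$ up to an exponentially small correction. The only difference is bookkeeping --- you split the integrand first (using linearity of the regularized integral) and split the domain only for the remainder, whereas the paper splits the domain into $\regint_0^1$ and $\regint_1^\infty$ first and then expands --- which does not change the substance of the argument.
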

\begin{proof} We split the integral into $\regint_0^1$ and $\regint_1^\infty$.
Since $\tr( e^{-t(L+z)} ) = O( e^{-\Re z} )$, for $t \geq 1$ the contribution
of $\regint_1^\infty$ is $O( e^{-\Re z} )$ as well, and hence it contributes
only to the error term of the claimed expansion.

Next we fix a large $N$ and write
\[
   \tr(e^{-tL}) = \sum_{\Re \ga \le N}\sum_{ k=0}^{ k_\ga} 
   A_{\ga k}^H(L)\cdot t^\ga \log^k t + O(t^N),
\]
as $t\to 0+$.  The contribution of the remainder to $\regint_0^1$ is 
$O(z^{-N})$. So it remains to look at each individual summand which
contributes
\[
  \regint_0^1 e^{-tz} \cdot t^{\ga-1} \log^k t\, dt =
    \regint_0^\infty e^{-tz} \cdot t^{\ga-1} \log^k t\,  dt 
  - \int_1^\infty e^{-tz} \cdot t^{\ga-1}  \log^k t\, dt.
\]
The contribution of the last summand is again of exponential decay in $z$.
Thus
we reach the conclusion.
\end{proof}

Combining Lemmas \ref{p.Gamma}, \ref{p.zetanull}, and \ref{p.regint.expand}
we arrive at

\begin{theorem}\label{Theorem-AsympExpLogZetaDet} 
	Let $L$ be a discrete self-adjoint bounded below operator in
 the Hilbert space $\calH$ with resolvent of $p^{th}$-Schatten class. 
 Moreover,
 assume that the trace of the heat semigroup has an asymptotic expansion
 \eqref{eq.heat.zero} with $A_{\ga k}^H(L)=0$ for $\ga\leq 0$ and 
 $k>0$\footnote{i.e. the
 Standing assumptions \ref{sss.ZDER.SA} are fulfilled.}. Then 
 $\log\detz(L+z)$ has an asymptotic expansion as $\Re z\to \infty$
\begin{align}
  \log&\detz(L+z) \sim_{\Re z \to \infty} \nonumber\\
        &-\sum_{\stackrel{\ga, k}{-\ga\not \in \Z_+}} (-1)^{k} \sum_{j\geq k} 
        A_{\ga j}^H(L)\cdot  {j\choose k}\cdot
        \bl \Pf \pl^{j-k}\Gamma\br(\ga) \cdot z^{-\ga}\log^k z 
        \label{eq.asymp.log.zeta.det.1}\\
        &+ A_{00}^H(L) \cdot \log z \label{eq.asymp.log.zeta.det.2}\\
        &+ \sum_{n=1}^\infty A_{-n \;0}^H(L)\cdot  (\Res_{-n} 
        \Gamma)\cdot \Bl z^n\log z - L_n \cdot z^n \Br, 
        \label{eq.asymp.log.zeta.det.3}
\end{align}
where $L_n = \sum_{j=1}^n \frac 1j$.
\end{theorem}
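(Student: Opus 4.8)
The plan is to apply Proposition \ref{Prop-Zetadeterminant} not to $L$ but to the shifted operator $L+z$. By Lemma \ref{p.zetanull}, for every $\Re z>0$ the operator $L+z$ again satisfies the Standing assumptions \ref{sss.ZDER.SA}, and since $\Re z>0$ forces $\ker(L+z)=0$, formula \eqref{eq.zeta.det.asymp.heat.exp.2} becomes
\begin{equation*}
  -\log\detz(L+z) = \gamma \cdot A_{00}^H(L+z) + \regint_0^\infty t\ii \tr(e^{-t(L+z)}) dt.
\end{equation*}
I would expand the two summands separately as $\Re z\to\infty$. The constant term $A_{00}^H(L+z)=\zeta(0;L+z)$ is handled directly by Lemma \ref{p.zetanull}, which expresses it as the finite sum $\sum_{n\geq 0}(\Res_{-n}\Gamma)\,A_{-n,0}^H(L)\,z^n$.

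For the regularized integral I would invoke Lemma \ref{p.regint.expand} to reduce the problem, modulo an $O(e^{-\Re z})$ error, to the model integrals $\regint_0^\infty e^{-tz} t^{\ga-1}\log^k t\,dt$. Each of these is evaluated by solving Lemma \ref{p.Gamma} for the regularized integral, which gives
\begin{equation*}
  \regint_0^\infty e^{-tz} t^{\ga-1}\log^k t\,dt = z^{-\ga}\Bl \sum_{j=0}^k \binom{k}{j}(-1)^j \bl\Pf\pl^{k-j}\Gamma\br(\ga)\log^j z + \delta_{-\ga,\Z_+}\frac{(-1)^{k+1}(\Res_\ga\Gamma)\log^{k+1}z}{k+1}\Br.
\end{equation*}
At this point the computation splits according to whether $-\ga\in\Z_+$. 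For $-\ga\notin\Z_+$ the $\delta$-term drops out; after multiplying by $A_{\ga k}^H(L)$, summing, and regrouping the double sum by the power of $\log z$ (interchanging the roles of the heat index and the logarithmic index through the identity $\binom{k}{j}$), one obtains precisely line \eqref{eq.asymp.log.zeta.det.1}, with the overall sign coming from $-\log\detz(L+z)$. For $\ga=-n$, $n\in\Z_+$, the Standing assumptions force $A_{-n,k}^H(L)=0$ for $k>0$, so only $k=0$ survives; here the $\delta$-term is active and is what produces the $\log z$ factors, while $\bl\Pf\Gamma\br(-n)=(\Res_{-n}\Gamma)(L_n-\gamma)$ by \eqref{eq.Gamma.facts.2}, together with $\Res_0\Gamma=1$ and $\bl\Pf\Gamma\br(0)=-\gamma$ from \eqref{eq.Gamma.facts.1}.

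The final and most delicate step is the bookkeeping of the $\gamma$-dependent contributions. The term $\gamma\cdot A_{00}^H(L+z)$ carries a factor $\gamma$ on each power $z^n$, whereas each integer model integral carries the $-\gamma$ hidden inside $\bl\Pf\Gamma\br(-n)$. These cancel term by term: for $n=0$ the two constant contributions $\gamma A_{00}^H(L)$ annihilate, and for $n\geq 1$ one is left with $(\Res_{-n}\Gamma)A_{-n,0}^H(L)\,z^n(L_n-\log z)$, which after the sign flip yields line \eqref{eq.asymp.log.zeta.det.3}; the surviving $n=0$ logarithmic term gives line \eqref{eq.asymp.log.zeta.det.2}. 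I expect this cancellation of the $\gamma$ terms to be the main obstacle, since it requires carefully matching the partie finie values of $\Gamma$ against the explicit Euler--Mascheroni constant appearing in \eqref{eq.zeta.det.asymp.heat.exp.2}; once it is verified, collecting the three groups of terms gives the asserted expansion.
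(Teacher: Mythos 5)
Your proposal is correct and follows essentially the same route as the paper's own proof: applying Proposition \ref{Prop-Zetadeterminant} to the shifted operator $L+z$, expanding the two summands via Lemmas \ref{p.zetanull} and \ref{p.regint.expand}, evaluating the model integrals with Lemma \ref{p.Gamma}, and then cancelling the Euler--Mascheroni contributions using $\gamma\cdot\Res_0\Gamma + (\Pf\Gamma)(0)=0$ and $(\Pf\Gamma)(-n)=(\Res_{-n}\Gamma)(L_n-\gamma)$. Your sign bookkeeping, including the identification of the surviving $n=0$ logarithmic term as line \eqref{eq.asymp.log.zeta.det.2} and the $z^n(\log z - L_n)$ terms as line \eqref{eq.asymp.log.zeta.det.3}, matches the paper's computation exactly.
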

Note that the coefficient of $z^0\log^0 z$ in this expansion vanishes.
Thus we obtain a different proof of \cite[Lemma 2.2]{HLV17}.

For (general) pseudodifferential operators on a compact manifold
(with or without boundary) the result was proved by genuinely pseudodifferential
methods in \cite[Appendix]{BFK92}. Our standing assumption is in general
only fulfilled for \emph{differential operators}. Our result
shows, however, that even if there are no $\log$-terms in the heat expansion,
nevertheless the terms $t^{-n}$, $n\in\Z_+$, in the heat expansion cause
a term $z^n \log z$ in the expansion of $\log\detz$. Insofar we cannot
confirm the claim made on the bottom of page 63 in \cite{BFK92}. Of course 
the Standing assumption \ref{sss.ZDER.SA} can be relaxed to
cover \cite{BFK92}'s result completely. If there is a pole at $0$
of the $\zeta$-function we define the regularized determinant as the 
finite part of $-\zeta'$. This is not a problem at all. However,
formulas become very clumsy.

\begin{proof}The result follows by combining
Lemmas \ref{p.Gamma}, \ref{p.zetanull}, and \ref{p.regint.expand}, 
Proposition \ref{Prop-Zetadeterminant}
and \Eqref{eq.Gamma.facts.1}, \eqref{eq.Gamma.facts.2}:
We apply \eqref{eq.zeta.det.asymp.heat.exp.1} to $L+z$ to obtain
\begin{equation*}
	\log \detz (L+z) = - \gamma\cdot A_{00}^H(L+z) - \regint_0^\infty t^{-1} 
	\tr(e^{-t(L+z)}) dt.
\end{equation*}
By Lemma \ref{p.zetanull} the first summand expand as
\begin{equation}\label{eq.AH.L+z}
	-\gamma \cdot A^H_{00}(L+z) = - \sum_{n=0}^{\infty} \Bl 
	\Res_{-n}\Gamma\Br\cdot 
	A^H_{-n,0}(L)\cdot \gamma \cdot z^n.
\end{equation}
To the second summand we apply Lemma  \ref{p.regint.expand} and obtain
\begin{equation}\label{eq.L+z.regint}
	-\regint_0^\infty t^{-1} \tr(e^{-t(L+z)})dt \sim_{\Re z\to \infty}  
	-\sum_{\ga,k} A^{H}_{\ga k}(L) \regint_0^{\infty} e^{-tz} t^{\ga-1} 
	\log^k t dt.
\end{equation}
To each regularized integral on the right hand side of \Eqref{eq.L+z.regint} 
we apply Lemma \ref{p.Gamma}:
\begin{align}
	-\regint_0^\infty e^{-tz} t^{\ga-1} \log^k t dt = & -\sum_{j=0}^k 
	\binom{k}{j}(-1)^j \Bl\Pf \partial^{k-j}\Gamma \Br(\ga)\cdot z^{-\ga} 
	\log^j z \label{eq.regint.tlogt.1}\\
	&+ \delta_{-\ga,\Z_+} \frac{(-1)^{k}\Bl\Res_{\ga}\Gamma\Br 
	}{k+1}z^{-\ga}\log^{k+1}z. \label{eq.regint.tlogt.2}
\end{align}
Due to  the assumption, $A^H_{\ga k}=0$ if $-\ga\in \Z_+$ and $k>0$ the last 
summand appears only with $k=0$. 

Combining \eqref{eq.AH.L+z} - 
\eqref{eq.regint.tlogt.2} 
we arrive at
\begin{align}
	\log&\detz (L+z) \sim_{\Re z\to \infty}\nonumber\\ 
	&- \sum_{\stackrel{\ga, 
	k}{-\ga\not 
	\in \Z_+}} A^H_{\ga k}(L) \sum_{ j=0}^{k} \binom{k}{j}(-1)^j \Bl \Pf 
	\partial^{k-j}\Br(\ga)\cdot z^{-\ga} \log^j z 
	\label{eq.asymp.log.zeta.det.4}\\
	&-\sum_{n=0}^\infty A^H_{-n, 0}(L)\cdot \Bl\gamma \cdot 
	(\Res_{-n}\Gamma)+(\Pf 
	\Gamma) (-n) \Br \cdot z^{-n}\label{eq.asymp.log.zeta.det.5}\\
	&+ \sum_{n=0}^\infty A^{H}_{-n, 0}(L)\cdot (\Res_{-n}\Gamma) \cdot z^{n} 
	\log 
	z.\label{eq.asymp.log.zeta.det.6}	
\end{align}

Substituting $(j,k)\rightarrow (k,j)$, \Eqref{eq.asymp.log.zeta.det.4} gives 
\eqref{eq.asymp.log.zeta.det.1}. Using \Eqref{eq.Gamma.facts.1} and 
\eqref{eq.Gamma.facts.2} one checks that
\begin{equation*}
	\Res_0 \Gamma = 1,\qquad
	\gamma\cdot \Res_0\Gamma + (\Pf \Gamma)(0) = 0,
\end{equation*}
and for $n>0$
\begin{equation*}
	(\Pf \Gamma)(-n) = (\Res_{-n}\Gamma) \cdot\Bl 
	L_n-\gamma\Br.
\end{equation*}
Thus \Eqref{eq.asymp.log.zeta.det.5}, \eqref{eq.asymp.log.zeta.det.6} 
together give \Eqref{eq.asymp.log.zeta.det.2} and 
\eqref{eq.asymp.log.zeta.det.3} completing the proof.
\end{proof}

We are now able to describe the Taylor expansion of $\log\det_{\zeta} 
(L+z)$ about $z=0$.

\begin{proposition}\label{p.taylor.exp.logdetzeta}
	Let $L$ satisfy the Standing assumptions \ref{sss.ZDER.SA}. Then 
	$\zeta(s;L+z)$ is holomorphic for $(s,z)$ in a neighbourhood of $(0,0)$ 
	and for $n\in\Z_+$, $n>0$,
	\begin{equation}\label{eq.deriv.zeta.z.zero}
		\frac{d^n}{dz^n}   \log\det_{\zeta}(L+z)\big|_{z=0} = 
		(-1)^{n-1}(n-1)!\Bl 
		\Pf_{s=n} \zeta(s;L) + \frac{A_{-n,0}^H}{(n-1)!} \cdot L_{n-1}\Br,
	\end{equation}
where $L_{n-1} = \sum\limits_{j=1}^{n-1} \frac{1}{j}$. Therefore, the Taylor 
expansion of $\log\det_{\zeta}(L+z)$ about $z=0$ reads
\begin{equation*}
	\begin{aligned}
		\log\det_{\zeta}(L+z)&= \log\det_{\zeta}(L)
		+ \sum_{n=1}^{\infty} \frac{(-1)^{n-1}}{n} \cdot 
		(\Pf_{s=n} \zeta(s;L))\cdot z^n\\
		&\qquad + \sum_{n=1}^\infty \frac{(-1)^{n-1}}{n!} \cdot 
		L_{n-1}\cdot A_{-n,0}^H \cdot z^n.
	\end{aligned}
\end{equation*}
Note that the last sum is in fact finite.
\end{proposition}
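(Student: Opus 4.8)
The plan is to compute the derivatives $\frac{d^n}{dz^n}\big|_{z=0}\log\detz(L+z)$ directly from $\log\detz(L+z)=-\partial_s\zeta(s;L+z)\big|_{s=0}$, exploiting the elementary fact that differentiating in the shift parameter $z$ raises the spectral exponent by one. For $\Re s>p$ the series $\zeta(s;L+z)=\sum_{\gl\in\spec L}(\gl+z)^{-s}$ converges locally uniformly and may be differentiated termwise, so
\begin{equation*}
  \frac{d^n}{dz^n}\zeta(s;L+z)=(-1)^n\,s(s+1)\cdots(s+n-1)\,\zeta(s+n;L+z),
\end{equation*}
an identity between meromorphic functions of $s$ that persists for all $s$ by analytic continuation. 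Writing $(s)_n:=s(s+1)\cdots(s+n-1)$, this reduces the whole computation to
\begin{equation*}
  \frac{d^n}{dz^n}\log\detz(L+z)=(-1)^{n+1}\,\partial_s\bigl[(s)_n\,\zeta(s+n;L+z)\bigr]\big|_{s=0}.
\end{equation*}
The interchange of $\partial_s$ with $\frac{d^n}{dz^n}$ is legitimate once one knows that $(s,z)\mapsto\zeta(s;L+z)$ is jointly holomorphic near $(0,0)$: the defining series already gives this for $\Re s>p$ and $|z|$ below the spectral gap, and by Lemma \ref{p.zetanull} the operator $L+z$ again satisfies the Standing assumptions \ref{sss.ZDER.SA} for $\Re z>0$, so the continuation in $s$ is regular at $s=0$ with $\zeta(0;L+z)=A_{00}^H(L+z)$ depending polynomially on $z$. (Regularity at $z=0$ uses $0\notin\spec L$, so that no eigenvalue collapses as $z\to0$; a kernel would instead contribute a $\log z$ term to $\log\detz(L+z)$ and destroy analyticity.)

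The heart of the argument is the Laurent bookkeeping at $s=0$, where the pole of $\zeta(\cdot;L+z)$ at the positive integer $n$ makes $\zeta(s+n;L+z)$ carry a simple pole that must cancel against the simple zero of $(s)_n$. From $(s)_n=s\prod_{k=1}^{n-1}(s+k)$ and $\prod_{k=1}^{n-1}(1+s/k)=1+L_{n-1}s+O(s^2)$ one gets $(s)_n=(n-1)!\bigl(s+L_{n-1}s^2+O(s^3)\bigr)$. Inserting the Laurent expansion \eqref{eq.zeta.near.n}, $\zeta(s+n;L+z)=R(z)/s+C(z)+O(s)$ with residue $R(z)=A_{-n,0}^H(L+z)/(n-1)!$ and finite part $C(z)=\Pf_{s=n}\zeta(s;L+z)$, the product is regular at $s=0$ and
\begin{equation*}
  (s)_n\,\zeta(s+n;L+z)=(n-1)!\,R(z)+(n-1)!\bigl(C(z)+L_{n-1}R(z)\bigr)s+O(s^2),
\end{equation*}
so that $\partial_s\big|_{s=0}$ extracts $(n-1)!\bigl(C(z)+L_{n-1}R(z)\bigr)$. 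Evaluating at $z=0$ with $R(0)=A_{-n,0}^H/(n-1)!$ gives
\begin{equation*}
  \frac{d^n}{dz^n}\Big|_{z=0}\log\detz(L+z)=(-1)^{n+1}(n-1)!\Bigl(\Pf_{s=n}\zeta(s;L)+\tfrac{L_{n-1}A_{-n,0}^H}{(n-1)!}\Bigr).
\end{equation*}
Dividing by $n!$ and summing over $n\geq1$ then assembles the Taylor expansion about $z=0$; the contribution of the $A_{-n,0}^H$ is a finite sum, since only finitely many heat coefficients $A_{-n,0}^H$ are nonzero.

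The step I expect to be the main obstacle is precisely this pole--zero cancellation. One has to retain the first two Laurent coefficients of both factors, because the $L_{n-1}$-correction arises solely as the cross term between the $O(s^2)$ part of $(s)_n$ and the principal part $R(z)/s$ of $\zeta(s+n;L+z)$; a careless evaluation treating $\zeta$ as regular at $s=n$ would drop this term and return only the $\Pf$ contribution. A secondary point deserving care is the joint-holomorphy input, which both legitimizes the interchange of $s$- and $z$-derivatives above and, through the invertibility of $L$, guarantees that the derivatives at $z=0$ are well defined in the first place.
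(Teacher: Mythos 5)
Your argument is correct and is essentially the paper's own proof: you justify interchanging $\partial_s$ with $\partial_z^n$ by joint holomorphy of $\zeta(s;L+z)$ near $(0,0)$, you differentiate the Dirichlet series termwise to get $\frac{d^n}{dz^n}\zeta(s;L+z)=(-1)^n\,s(s+1)\cdots(s+n-1)\,\zeta(s+n;L+z)$, and you evaluate $\partial_s|_{s=0}$ of the product from the Laurent data \eqref{eq.zeta.near.n}, the $L_{n-1}$-term arising precisely as the cross term you describe. Two small remarks on this part: the paper passes from separate to joint holomorphy by explicitly invoking Hartogs's theorem, a step your sketch leaves implicit and should cite (separate holomorphy by itself does not license the interchange); and your parenthetical on invertibility is a legitimate catch, since the Standing assumptions \ref{sss.ZDER.SA} do not formally require $\ker L=\{0\}$, yet without this $\log\detz(L+z)$ acquires a non-analytic term $\dim\ker L\cdot\log z$.

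However, do not gloss over the fact that your final formula carries the sign $(-1)^{n+1}$ while \eqref{eq.deriv.zeta.z.zero} states $(-1)^{n}$: your sign is the correct one, and the paper's is a slip. In the paper's proof the last displayed line carries the prefactor $(-1)^{n-1}$ in front of $\bigl(\prod_{j=1}^{n-1}(s+j)\bigr)\cdot s\cdot\zeta(n+s;L)$, but each of the $n$ derivatives in $z$ contributes a factor $-(s+j)$, so the prefactor must be $(-1)^{n}$; this error propagates into \eqref{eq.deriv.zeta.z.zero}. The consistency checks favor you: for $n=1$ (with $s=1$ not a pole, e.g.\ when $p=1$) the derivative is $\zeta(1;L)=\tr(L^{-1})>0$, whereas \eqref{eq.deriv.zeta.z.zero} as printed would give $-\zeta(1;L)$; and the Taylor expansion displayed in the proposition itself carries $(-1)^{n-1}=(-1)^{n+1}$, agreeing with you. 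One last bookkeeping point: dividing your derivative formula by $n!$ gives the coefficient $\frac{(-1)^{n-1}}{n!}\,L_{n-1}A^{H}_{-n,0}$ in the last sum of the Taylor expansion, not $\frac{(-1)^{n-1}}{n}\,L_{n-1}A^{H}_{-n,0}$ as printed; so what your computation assembles is a corrected version of the stated expansion, in which only the $\Pf_{s=n}\zeta$-sum carries the $1/n$ factors.
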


\begin{proof}
	By Hartogs's theorem \cite[Theorem 2.2.8]{Hor1990}, to prove that 
	the zeta function of $L+z$ is 
	holomorphic in a neighbourhood of $(0,0)$ it is enough 
	to verify that $\zeta(s;L+z)$ is 
	holomorphic in $s$ and $z$, separately. For fixed $z$ with 
	$|z|<\lambda_{\min}$, Lemma 
	\ref{p.zetanull} shows  that $s\mapsto \zeta(s;L+z)$ is regular near 
	$s=0$. For fixed $s$, $s$ small enough, one invokes the heat expansion to 
	see that 
	$z\mapsto \zeta(s;L+z)$ is regular near $z=0$. Therefore, $\zeta(s;L+z)$ 
	is holomorphic 
	in a neighbourhood of $(0,0)$ and we can commute the 
	derivative in $z$ with the 
	derivative in $s$ of $\zeta(s;L+z)$.
	Thus, let $n\in \Z_+$, $n>0$.
	\begin{equation*}
		\begin{aligned}
			\frac{d^n}{dz^n}   \log\det_{\zeta}(L+z)\big|_{z=0} &= 
			-\frac{d^n}{dz^n}  \Bl \frac{d}{ds} 
			\zeta(s;L+z)\big|_{s=0}\Br\big|_{z=0}\\
			&=-\frac{d}{ds} \Bl\frac{d^n}{dz^n}   \tr( 
			(L+z)^{-s})\big|_{z=0}\Br\big|_{s=0}\\
			&=-\frac{d}{ds} \Bl
			(-1)^{n-1}\Bl\prod_{j=1}^{n-1}(s+j)\Br\cdot 
			s \cdot\zeta(n+s;L)\Br \big|_{s=0}.
		\end{aligned}
	\end{equation*}
	Now we use \Eqref{eq.zeta.near.n} and obtain 
	\Eqref{eq.deriv.zeta.z.zero}. 
\end{proof}

\subsection{The heat expansion and the 
resolvent expansion}\label{Sub-sec-Heat-Resolvent}

The method of proof of Theorem \ref{Theorem-AsympExpLogZetaDet} in fact can 
be summarized in yet another Watson Lemma type expansion lemma. As an 
application we present a concise exposition of the equivalence between the 
heat and the resolvent expansion in full generality.

In this section we still assume the Standing assumptions 
\ref{sss.ZDER.SA}\footnote{Actually the assumption on the vanishing of 
$A^H_{-n,k}(L)$ for $n\in \Z_+$ and $k>0$ is not needed  and may be dropped 
in this section.}. For the self-adjoint operator $L$ the resolvent can be 
expressed in terms of the heat operator as
\begin{equation}\label{Eq-RelResolveHeatSemi}
(L+z)^{-1} := \int_{0}^{\infty} e^{-zt} e^{-t L} dt,
    \qquad \text{for} \qquad \Re{z}> 0,
\end{equation} 
and more generally,
\begin{equation}\label{Eq-RelnResolveHeatSemi}
(L+z)^{-N} := \frac{1}{(N-1)!}\int_{0}^{\infty} t^{N-1} e^{-zt} e^{-t L} 
dt, 
\qquad \text{for} 
\qquad
\Re{z}> 0.
\end{equation}
Under the Assumption \eqref{eq-Schatten}, we have for $N\geq p$
\begin{equation}\label{Eq-ResolHeat}
\tr \bl (L+z)^{-N} \br = \frac{1}{(N-1)!}\int_{0}^\infty t^{N-1} e^{-zt} 
\tr( 
e^{-t L}) dt.
\end{equation}
Assuming \Eqref{eq.heat.zero} the expansion of $\tr \bl (L+z)^{-N} \br$ as 
$\Re z \to \infty$ follows along the lines of the proof of the Theorem 
\ref{Theorem-AsympExpLogZetaDet} and the lemmas used therein. However, we 
think it is worth to single out the following regularized integral version of 
Watson's Lemma (\cf \cite[Pg. 133]{Wat1918}). 
\begin{lemma}\label{Lemma-Asymptotics}
	Let $q:(0,\infty)\to \C$ be a locally integrable function. Assume that as 
	$t\to 0+$ $q$ has an asymptotic expansion
	\begin{equation}\label{eq.asymp.lemma.q.1}
	q(t) \sim \sum_{\ga,k} A^q_{\ga k} t^{\ga-1} 
	\log^k t, \qquad \text{as}\qquad  t\to 0+,
	\end{equation}
	and that there exists $r_0>0$ such that
	\begin{equation}\label{eq.asymp.lemma.q.2}
		\int_1^\infty e^{-r_0 t} |q(t)| dt <\infty.
	\end{equation}
	Then we have an asymptotic expansion 
	\begin{align}
	\regint_{0}^\infty e^{-z t} &q(t) dt \sim 
	\sum_{\ga,k} A^q_{\ga k} \regint_{0}^\infty e^{-tz} t^{\ga -1} \log^k t \
	dt + O((\Re z)^{-N}), \;\; 
	\text{as}\;\; \Re z\to \infty, \label{eq.asymp.lemma.q.3}\\
	&\sim \sum_{\ga, k} (-1)^k \sum_{j\geq k} A^q_{\ga j} \binom{j}{k} (\Pf 
	\pl^{j-k} \Gamma)(\ga)\cdot z^{-\ga} \cdot \log^k z 
	\label{eq.asymp.lemma.q.4}\\
	&\qquad+\sum_{n=0}^{\infty} \sum_{k=0}^{k_n} A_{-n,k}^q \frac{(-1)^{k-1} 
	\Res_{-n}\Gamma}{k+1}\cdot z^n \cdot 
	\log^{k+1}z+O((\Re z)^{-N}),\label{eq.asymp.lemma.q.5}
	\end{align}
for any $N>0$.
\end{lemma}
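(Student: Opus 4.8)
The plan is to split the integral at $t=1$ into $\regint_0^1$ and $\int_1^\infty$, and to dispose of the tail immediately: by the integrability hypothesis \eqref{eq.asymp.lemma.q.2}, for $\Re z > r_0$ we have $\left| \int_1^\infty e^{-zt} q(t)\, dt \right| \le \int_1^\infty e^{-(\Re z) t}|q(t)|\, dt = O(e^{-\Re z})$, which is absorbed into the $O((\Re z)^{-N})$ error term. This mirrors exactly the splitting done in the proof of Lemma \ref{p.regint.expand}. It remains to analyze $\regint_0^1 e^{-zt} q(t)\, dt$.

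For the near-zero piece I would fix a large $N$ and truncate the asymptotic expansion \eqref{eq.asymp.lemma.q.1}, writing $q(t) = \sum_{\Re\ga \le N}\sum_{k} A^q_{\ga k}\, t^{\ga-1}\log^k t + R_N(t)$ with $R_N(t) = O(t^{N-1})$ as $t\to 0+$. The remainder contributes $\regint_0^1 e^{-zt} R_N(t)\, dt = O((\Re z)^{-N})$ (again of the claimed error size), so it suffices to treat each individual summand. For each one I would revert from $\regint_0^1$ to $\regint_0^\infty$ at the cost of the exponentially small $\int_1^\infty e^{-zt} t^{\ga-1}\log^k t\, dt$, giving
\begin{equation*}
  \regint_0^1 e^{-zt} t^{\ga-1}\log^k t\, dt
    = \regint_0^\infty e^{-zt} t^{\ga-1}\log^k t\, dt + O(e^{-\Re z}).
\end{equation*}
Summing over the truncated expansion and recombining the pieces establishes \eqref{eq.asymp.lemma.q.3}.

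To pass from \eqref{eq.asymp.lemma.q.3} to the explicit form \eqref{eq.asymp.lemma.q.4}--\eqref{eq.asymp.lemma.q.5}, I would apply Lemma \ref{p.Gamma} directly to each regularized integral $\regint_0^\infty e^{-zt} t^{\ga-1}\log^k t\, dt$. That lemma yields the $\ga$-dependent term as a sum $\sum_{j=0}^k \binom{k}{j}(-1)^j (\Pf \pl^{k-j}\Gamma)(\ga)\, z^{-\ga}\log^j z$ together with the correction term $\delta_{-\ga,\Z_+}\frac{(-1)^{k+1}(\Res_\ga\Gamma)\log^{k+1}z}{k+1}\cdot z^{-\ga}$. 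Interchanging the order of summation between the outer sum over $k$ and the inner sum over $j$ (relabeling $(j,k)\mapsto(k,j)$, exactly as in the proof of Theorem \ref{Theorem-AsympExpLogZetaDet}) reorganizes the first family into $\sum_{\ga,k}(-1)^k\sum_{j\ge k} A^q_{\ga j}\binom{j}{k}(\Pf\pl^{j-k}\Gamma)(\ga)\, z^{-\ga}\log^k z$, which is precisely \eqref{eq.asymp.lemma.q.4}. The correction terms are nonzero only when $-\ga \in \Z_+$, say $\ga = -n$, so they collect into $\sum_{n,k} A^q_{-n,k}\frac{(-1)^{k-1}(\Res_{-n}\Gamma)}{k+1} z^n \log^{k+1}z$, giving \eqref{eq.asymp.lemma.q.5}.

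The only genuinely delicate point is justifying that the asymptotic relation $\sim$ is preserved through these manipulations — in particular that truncating at level $N$, applying the regularized-integral identities term by term, and reassembling does not introduce errors larger than $O((\Re z)^{-N})$. The key enabling fact is \eqref{eq.regzero}, $\regint_0^\infty t^\ga \log^k t\, dt = 0$, which guarantees that the regularized integral of each monomial in the tail of the expansion is genuinely negligible and does not secretly contribute a constant; combined with the exponential smallness of every $\int_1^\infty$-type correction, this keeps all discarded contributions within the stated error. Since every step is bounded uniformly for $\Re z$ large and $N$ is arbitrary, the two asymptotic expansions follow, and this is where I would expect to spend the most care.
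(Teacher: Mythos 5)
Your proposal is correct and follows essentially the same route as the paper's own proof: split at $t=1$, kill the tail using \eqref{eq.asymp.lemma.q.2}, truncate the expansion near $0$ with an $O((\Re z)^{-N})$ remainder, revert each $\regint_0^1$ to $\regint_0^\infty$ up to exponentially small corrections, and then apply Lemma \ref{p.Gamma} with the $(j,k)$ relabeling from Theorem \ref{Theorem-AsympExpLogZetaDet} to obtain \eqref{eq.asymp.lemma.q.4} and \eqref{eq.asymp.lemma.q.5}. The only cosmetic difference is your closing appeal to \eqref{eq.regzero}, which is not actually needed since the truncation remainder is an honest integrable function on $(0,1]$ and is bounded directly.
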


\begin{remark}
(1) In  \Eqref{eq.asymp.lemma.q.1} we chose to write the 
		exponent of $t$ as  $\ga-1$ instead of $\ga$ to have a better 
		comparability to Lemmas \ref{p.Gamma}, \ref{p.zetanull}, 
		\ref{p.regint.expand} and Theorem \ref{Theorem-AsympExpLogZetaDet}.

\noindent (2) The notation $\sum\limits_{\ga,k}$ has the usual meaning as 
elaborated in Section \ref{ss.RIPF}, \cf also \Eqref{eq.heat.zero}.

\noindent (3) The expansion \eqref{eq.asymp.lemma.q.3} is valid as $\Re z\to 
\infty$ in particular it holds in any sector 
$\Lambda_{\epsilon}:=\setdef{z\in \C}{|\arg z|< \epsilon}$ for some 
$0<\epsilon<\pi/2$. Furthermore, in any such sector we have
\begin{equation*}
	O( (\Re z)^{-N} ) = O(|z|^{-N}),\qquad \text{as}\qquad |z|\to 
	\infty,
\end{equation*}
in $\Lambda_{\epsilon}$ for any $N>0$.
	
\end{remark}

\begin{proof}
	The proof follows the same scheme as the proof of Lemma 
	\ref{p.regint.expand} and of Theorem \ref{Theorem-AsympExpLogZetaDet}.
	
	First we split the integral into $\regint_{0}^1$ and $\int_1^\infty$. Due 
	to the assumption \eqref{eq.asymp.lemma.q.2} we find 
	\begin{equation*}
			\left| \int_1^{\infty} e^{-zt}q(t) dt\right| \leq e^{-(\Re 
			z-r_0)t} \int_1^\infty e^{-r_o t} |q(t)|dt = O\Bl e^{-(\Re z - 
			r_0)t} \Br = O((\Re z)^{-N}).
	\end{equation*}

	Fixing a large $K$ and writing
	\begin{equation*}
		\begin{aligned}
			q(t) &= \sum_{\Re \ga \leq K} \sum_{k=0}^{k_\ga} A_{\ga k}^q 
			t^{\ga 
		-1}  \log^k t + O(t^{K})=:q_0(t) + O(t^K),
		\end{aligned}
	\end{equation*}
	we note that 
	\begin{equation*}
		\begin{aligned}
			\left| \int_{0}^{1} e^{-zt} (q(t)-q_0(t)) dt \right|&\leq C\cdot  
			\int_{0}^1 
			e^{-(\Re z) t} t^{K} dt\\
			&= C\cdot \int_{0}^{(\Re z) t} e^{-u} u^{K} du \cdot (\Re 
			z)^{- K -1}= O\Bl (\Re z)^{- K -1}\Br.
		\end{aligned}
	\end{equation*}
	Therefore it remains to look at each individual summand which, as in the 
	proof of Lemma \ref{p.regint.expand}, contributes
	\begin{equation*}
		\regint_0^1 e^{-tz}\cdot t^{\ga-1}\log^k t \ dt = \regint_0^\infty 
		e^{-zt}\cdot 
		t^{\ga-1} 
		\log^k t \ dt - \int_1^\infty e^{-zt} \cdot t^{\ga-1}\log^k t\ dt.
	\end{equation*}
The second integral is $O\Bl e^{-(\Re z - \delta)}\Br$, $\Re z \to \infty$ 
for any $\delta>0$. Thus the expansion \eqref{eq.asymp.lemma.q.3} is proved.

As in the proof of Theorem \ref{Theorem-AsympExpLogZetaDet} we apply Lemma 
\ref{p.Gamma} to each regularized integral on the right hand side of 
\Eqref{eq.asymp.lemma.q.3}:
\begin{equation*}
	\begin{aligned}
		\regint_{0}^\infty& e^{-zt} t^{\ga-1}\log^k t \ dt\\ 
		=& \sum_{ j=0}^k 
		\binom{k}{j} (-1)^j (\Pf \pl^{k-j} \Gamma)(\ga)\cdot 
		z^{-\ga}\cdot\log^j z
		+ \delta_{-\ga,\Z_+} \frac{(-1)^{k-1}\Res_{\ga}\Gamma}{k+1} \cdot
		z^{-\ga}\cdot\log^{k+1}z.
	\end{aligned}
\end{equation*}
This gives \Eqref{eq.asymp.lemma.q.4} and \eqref{eq.asymp.lemma.q.5}.

 \end{proof}

\begin{proposition}
	Let $L$ be a self-adjoint bounded below operator in the Hilbert 
	space $\calH $ with resolvent in the $p^{th}$-Schatten class. Moreover, 
	assume 
	that 
	the trace of the heat semigroup has an asymptotic expansion 
	\Eqref{eq.heat.zero}. Let $N\in \N$, $N\geq p$. Then $\tr\Bl (L+z)^{-N} 
	\Br$ has the following asymptotic expansion as $\Re z \to \infty$
	\begin{equation*}
	\begin{aligned}
	\tr \bl(L+z)^{-N}\br \sim& 
	\sum_{\ga,k} (-1)^k \sum_{j\geq k} A^H_{\ga k}(L) \frac{(\Pf \pl^{j-k} 
	\Gamma)(\ga+N)}{(N-1)!}\cdot z^{-\ga-N}\cdot \log^k z\\
	&+\sum_{m=0}^\infty \sum_{k=0}^{k_m} A^H_{-m-N,k}(L) \frac{(-1)^{k-1} 
	\Res_{-m-N}\Gamma}{k+1} \cdot z^{m+N} \cdot \log^{k+1}z. 
	\end{aligned}
	\end{equation*}
\end{proposition}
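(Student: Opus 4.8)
The plan is to obtain the expansion directly from the regularized Watson lemma, Lemma~\ref{Lemma-Asymptotics}, applied to a single rescaling of the heat trace, exactly in the spirit of the proof of Theorem~\ref{Theorem-AsympExpLogZetaDet}. The starting point is the resolvent representation \eqref{Eq-ResolHeat}, which for $N\geq p$ reads
\[
  \tr\bl (L+z)^{-N}\br=\frac{1}{(N-1)!}\int_0^\infty t^{N-1}e^{-zt}\,\tr(e^{-tL})\,dt,\qquad \Re z>0 .
\]
Accordingly I would set
\[
  q(t):=\frac{1}{(N-1)!}\,t^{N-1}\,\tr(e^{-tL}),
\]
and, using \eqref{Eq-ResolHeat} together with the fact that a convergent integral coincides with its regularized value, write $\tr((L+z)^{-N})=\regint_0^\infty e^{-zt}q(t)\,dt$. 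This is precisely the object that Lemma~\ref{Lemma-Asymptotics} is designed to expand.

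The next step is to verify the two hypotheses of that lemma. For the short-time expansion \eqref{eq.asymp.lemma.q.1} I substitute the heat expansion \eqref{eq.heat.zero} to get
\[
  q(t)\sim\frac{1}{(N-1)!}\sum_{\ga,k}A^H_{\ga k}\,t^{\ga+N-1}\log^k t,\qquad t\to 0+ ,
\]
so that, writing $q(t)\sim\sum_{\ga',k}A^q_{\ga' k}\,t^{\ga'-1}\log^k t$, the coefficients are produced by the single shift $\ga'=\ga+N$, namely $A^q_{\ga+N,\,k}=A^H_{\ga k}/(N-1)!$. For the tail condition \eqref{eq.asymp.lemma.q.2} I invoke \eqref{eq.heat.inf}: since $\tr(e^{-tL})=\dim\ker L+O(e^{-t\gl_{\min}})$ as $t\to\infty$, one has $q(t)=O(t^{N-1})$, whence $\int_1^\infty e^{-r_0 t}|q(t)|\,dt<\infty$ for every $r_0>0$. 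With both hypotheses in hand, Lemma~\ref{Lemma-Asymptotics} applies.

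It then remains to translate the conclusions \eqref{eq.asymp.lemma.q.4}--\eqref{eq.asymp.lemma.q.5} back into the heat coefficients by inverting the shift. In the regular part \eqref{eq.asymp.lemma.q.4} I substitute $A^q_{\ga+N,j}=A^H_{\ga j}/(N-1)!$ and rename the summation variable, which replaces $z^{-\ga'}$ by $z^{-\ga-N}$ and evaluates each partie-finie value $\Pf\pl^{j-k}\Gamma$ at the shifted argument $\ga+N$, while retaining the binomial weight $\binom{j}{k}$; this gives the first double sum. In the singular part \eqref{eq.asymp.lemma.q.5} the integer powers of $z$ arise exactly from the poles $\ga'=\ga+N\in-\Z_+$, i.e.\ from the heat coefficients of the form $A^H_{-m-N,k}$, and carrying the same substitution through the factor $(-1)^{k-1}\Res\Gamma/(k+1)$ yields the second sum.

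All of the analysis is already contained in Lemma~\ref{Lemma-Asymptotics}, so the sole remaining task—and the one point where I would be most careful—is the bookkeeping of the single index shift $\ga\mapsto\ga+N$: evaluating every $\Pf\pl^{j-k}\Gamma$ and every $\Res\Gamma$ at the correctly shifted argument, keeping the binomial weights $\binom{j}{k}$ in the regular sum, and matching the resulting powers of $z$ together with the accompanying factors $1/(N-1)!$ in both sums. No estimate beyond those already used for the heat trace and in the regularized Watson lemma is required.
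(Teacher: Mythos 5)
Your proposal is correct and is essentially the paper's own proof, which consists of the single remark that the result ``follows directly from the previous lemma and \eqref{Eq-ResolHeat}'', i.e.\ of applying Lemma~\ref{Lemma-Asymptotics} to $q(t)=\frac{1}{(N-1)!}\,t^{N-1}\tr(e^{-tL})$ with the index shift $\ga\mapsto\ga+N$, exactly as you do. In fact your careful bookkeeping produces what the lemma really yields --- the weight $A^H_{\ga j}\binom{j}{k}$ in the regular sum, and the terms $\frac{1}{(N-1)!}A^H_{-m-N,k}\,\frac{(-1)^{k-1}\Res_{-m}\Gamma}{k+1}\,z^{m}\log^{k+1}z$ in the singular sum --- so the formula as printed in the proposition (which omits $\binom{j}{k}$ and the factor $1/(N-1)!$ in the second sum, writes $A^H_{\ga k}$ where $A^H_{\ga j}$ is meant, and shifts the argument of $\Res\Gamma$ and the power of $z$ by $N$) contains typographical slips that your derivation corrects.
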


\begin{proof}
	The result follows directly from the previous lemma and 
	\Eqref{Eq-ResolHeat}.
\end{proof}

On the other hand, we can write the heat semigroup as a contour integral in 
terms of the resolvent, 
\begin{equation}\label{Eq-HeatResol}
e^{-tL} = t^{1-N} \frac{(N-1)!}{2\pi \ i} \int_{C} e^{-t \mu}\cdot 
(L-\mu)^{-N} 
d\mu,
\end{equation}
where the contour $C$ is depicted in the Figure \ref{Figure1}. 
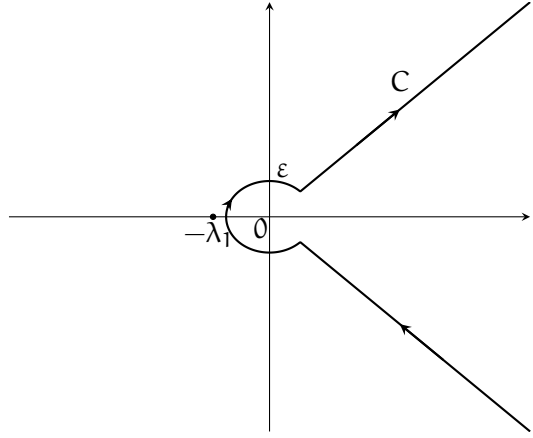
\begin{figure}[htb]
	\begin{tikzpicture}
		\begin{axis}[
			axis x line = center,
			axis y line = center,
			ymax = 6,
			ymin = -6,
			xmax = 6,
			xmin = -6,
			]
			
			\draw[thick] (axis cs:2,2)  -- (axis cs:6,6) ;
			\draw[>=stealth,<-,thick] (axis cs:3,3)  -- (axis cs: 0.701,0.701 
			) ;
			\draw[thick] (axis cs:0.701,-0.701)  -- (axis cs:4,-4) ;
			\draw[>=stealth,<-,thick] (axis cs:3,-3)  -- (axis cs:6,-6) ;
			
			\addplot [ samples=60, variable = \u,domain=45:165,thick]
			({cos(u)},{sin(u)});
			\addplot [>=stealth,<-,samples=60, variable = \u, 
			domain=150:315,thick]
			({cos(u)},{sin(u)});
			
			\draw[color=black] (-0.2,-0.3) node {$0$};
			\draw[color=black] (3,3.8) node {$C$};
            \draw[fill=black] (-1.3,0) circle[radius=1.0pt];
			\draw[color=black] (-1.4,-0.5) node {$-\lambda_1$};
			\draw[color=black] (0.3,1.3) node {$\varepsilon$};
		\end{axis}
	\end{tikzpicture}
	\caption{A graphic representation of the curve $C$.}\label{Figure1}
\end{figure}

Then one has, \cf \eg \cite[Lemma 2.2]{BL96}.
\begin{proposition}\label{Prop-RAHA}
Let $L$ be a self-adjoint bounded below operator in the Hilbert 
space $\calH$ with resolvent in the $p^{th}$-Schatten class.	
If the resolvent trace has the following asymptotic expansion
\begin{equation}\label{Eq-Res-AsympExp}
\tr\bl(L+z)^{-N}\br \sim \sum_{\ga,k} A^R _{\ga 
k}(L)
\cdot z^{-\alpha-N}\cdot \log^k z, \qquad {\text{as}}\qquad \Re z\to 
\infty,\; z\in 
\Lambda_{\epsilon},
\end{equation} 
for some $N\geq p$, then heat 
semigroup has the following asymptotic expansion as $t\to 0+$
\begin{equation*}
\tr(e^{-t L})\sim \sum_{\ga,k} (-1)^{k+1} (N-1)! \sum_{j=k}^{k_\ga} A^R_{\ga 
j}(L) 
\binom{j}{j-k} \Bl \Pf \partial^{j-k} 
\frac{1}{\Gamma} \Br(-\alpha-N) \cdot t^{\alpha}\  \log^k t.
\end{equation*}
\end{proposition}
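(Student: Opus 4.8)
The plan is to extract the small-$t$ expansion of $\tr(e^{-tL})$ directly from the contour representation \eqref{Eq-HeatResol}. Taking the operator trace (legitimate for $N\ge p$, since then $(L-\mu)^{-N}$ is trace class and the trace may be pulled inside the $\mu$-integral) gives
\[
\tr(e^{-tL}) = t^{1-N}\,\frac{(N-1)!}{2\pi i}\int_C e^{-t\mu}\,\tr\!\left((L-\mu)^{-N}\right)d\mu .
\]
As $t\to 0+$ the behaviour of the right-hand side is governed by the integrand for large $|\mu|$ on the two rays of $C$ (Figure \ref{Figure1}), which is exactly the regime in which the hypothesis \eqref{Eq-Res-AsympExp} applies. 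First I would deform $C$ so that its two rays lie inside a sector $\Lambda_\epsilon$ in which \eqref{Eq-Res-AsympExp} is valid with a uniform remainder; the bounded part of $C$ near $\spec L$ then contributes a function that is entire in $t$, whose role is routine to account for (see \cite[Lemma 2.2]{BL96}).

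The computational heart is the Hankel-type identity
\[
\frac{1}{2\pi i}\int_C e^{-t\mu}\,(-\mu)^{-s}\,d\mu = \frac{t^{\,s-1}}{\Gamma(s)},
\]
which is the classical integral representation of the reciprocal $\Gamma$-function: the jump of $(-\mu)^{-s}$ across the cut along $\spec L$ is proportional to $\sin(\pi s)$, i.e. to $1/\bl\Gamma(s)\Gamma(1-s)\br$, and this is precisely the source of the reciprocal $\Gamma$-factors in the statement. The $\log$-powers are produced by differentiating in $s$: since $(-\mu)^{-s}\log^{k}(-\mu)=(-1)^{k}\partial_s^{k}(-\mu)^{-s}$, Leibniz's rule applied to $t^{\,s-1}/\Gamma(s)$ yields the binomial coefficients $\binom{j}{k}=\binom{j}{j-k}$ and the derivatives $\partial^{j-k}(1/\Gamma)$, in complete analogy with Lemma \ref{p.Gamma}, where the combinatorial identity \eqref{eq.combinatorial} plays the same bookkeeping role. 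Substituting the monomials $z^{-\ga-N}\log^{k}z$ of \eqref{Eq-Res-AsympExp} with $z=-\mu$, multiplying by the prefactor $t^{1-N}(N-1)!$, and collecting the coefficient of $t^{\ga}\log^{k}t$ then produces the displayed expansion. As a consistency check, these coefficients must be exactly the ones inverting the heat $\to$ resolvent transformation of the preceding Proposition, which forces the reciprocal-$\Gamma$ structure since $\Gamma\cdot\Gamma^{-1}=1$.

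The one genuinely delicate point is the passage from the \emph{asymptotic} hypothesis \eqref{Eq-Res-AsympExp} to a \emph{term-by-term} evaluation of a non-compact contour integral. Concretely, I would write $\tr\!\left((L-\mu)^{-N}\right)=\sum_{\Re\ga\le K}A^R_{\ga k}(N)\,(-\mu)^{-\ga-N}\log^k(-\mu)+R_K(\mu)$ with $|R_K(\mu)|=O(|\mu|^{-\ga_K-N})$ uniformly in $\Lambda_\epsilon$, and show that $\int_C e^{-t\mu}R_K(\mu)\,d\mu$ contributes only $O(t^{K})$, while each tail $\int_1^\infty$ cut off from the individual Hankel integrals is of exponential decay in $1/t$. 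This is the exact analogue of the splitting $\regint_0^1+\int_1^\infty$ and the remainder estimates already carried out in Lemma \ref{Lemma-Asymptotics} and in the proof of Theorem \ref{Theorem-AsympExpLogZetaDet}, now transported from the $t$-integral to the $\mu$-contour. The estimates themselves are standard complex-analytic ones going back to \cite[Lemma 2.2]{BL96}; once this uniform control is secured the expansion follows and the proof is complete.
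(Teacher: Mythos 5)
Your route is the one the paper itself intends: the paper's entire ``proof'' of this proposition consists of displaying \eqref{Eq-HeatResol} and deferring to \cite[Lemma 2.2]{BL96}, and your plan (take the trace of the contour integral, evaluate each expansion term by the Hankel identity $\frac{1}{2\pi i}\int_C e^{-t\mu}(-\mu)^{-s}d\mu = t^{s-1}/\Gamma(s)$, generate the $\log$-powers by $\partial_s$-differentiation, estimate remainders term by term) is exactly that argument. The genuine gap is your sentence ``deform $C$ so that its two rays lie inside a sector $\Lambda_\epsilon$ in which \eqref{Eq-Res-AsympExp} is valid'', and equally the requirement that $|R_K(\mu)|=O(|\mu|^{-\alpha_K-N})$ hold ``uniformly in $\Lambda_\epsilon$''. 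In \eqref{Eq-HeatResol} the resolvent trace is evaluated at $z=-\mu$. Any admissible contour must enclose $\spec L\subset[0,\infty)$ and have rays with $|\arg\mu|<\pi/2$, since otherwise $e^{-t\mu}$ does not decay; hence $|\arg(-\mu)|>\pi/2$ on the rays, i.e.\ the points $z$ at which you invoke \eqref{Eq-Res-AsympExp} lie in a sector about the \emph{negative} real axis, disjoint from $\Lambda_\epsilon=\{z:|\arg z|<\epsilon\}$, $\epsilon<\pi/2$, where the hypothesis is assumed. No deformation reconciles the two regions: a contour with $-\mu\in\Lambda_\epsilon$ has exponentially exploding $e^{-t\mu}$ along its rays. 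Nor does an expansion as $\Re z\to+\infty$ extend for free across the imaginary axis: $e^{-z}$ decays faster than any power as $\Re z\to+\infty$ yet is unbounded for $|\arg z|>\pi/2$. So the proof needs an ingredient you do not supply: either strengthen the hypothesis to hold uniformly in a sector $|\arg z|\le\pi-\delta$, $0<\delta<\pi/2$ (then put the rays of $C$ at angles $\pm\theta$ with $\delta<\theta<\pi/2$, dilate $C$ to $\tfrac1t C$ for the remainder estimate --- note the dilated circular part sweeps \emph{all} arguments $|\arg(-\mu)|\le\pi-\theta$, which is why the big sector is unavoidable --- and your computation goes through); or prove an extension lemma using the specific structure $\tr\bigl((L+z)^{-N}\bigr)=\sum_q(\lambda_q+z)^{-N}$, a Stieltjes-type transform of a positive measure, whose asymptotics on the positive real axis do propagate into such sectors; or bypass the contour entirely via the identity $\zeta(s;L)=\frac{\sin\pi s}{\pi}\cdot\frac{(N-1)!\,\Gamma(1+s-N)}{\Gamma(s)}\regint_0^\infty z^{N-1-s}\tr\bigl((L+z)^{-N}\bigr)dz$ stated at the end of the section, which uses the hypothesis only on the positive real axis and yields the heat expansion through Proposition \ref{Prop-DDS-HAE}.

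A second defect: you assert that collecting the coefficient of $t^{\alpha}\log^k t$ ``produces the displayed expansion'', but it does not. Carried to the end, your own computation gives
\[
\tr(e^{-tL})\sim\sum_{\alpha,k}\Bigl((N-1)!\sum_{j\ge k}(-1)^j\binom{j}{k}A^R_{\alpha j}(N)\,\bigl(\partial^{j-k}\tfrac{1}{\Gamma}\bigr)(\alpha+N)\Bigr)\,t^{\alpha}\log^k t,
\]
with $1/\Gamma$ and its derivatives evaluated at $\alpha+N$ and the sign $(-1)^j$ inside the sum, not the sign $(-1)^{k+1}$ outside and the argument $-\alpha-N$ as in the display. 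The one-eigenvalue test ($N=1$, $\tr((L+z)^{-1})=\sum_n(-\lambda)^n z^{-n-1}$) settles which is right: the displayed formula returns $0$ identically because $1/\Gamma(-n-1)=0$, whereas the formula above returns $\sum_n\frac{(-\lambda)^n}{n!}\,t^n=e^{-t\lambda}$, and it is also the exact inverse --- via Leibniz and $\Gamma\cdot\Gamma^{-1}=1$, precisely your consistency check --- of the heat-to-resolvent proposition preceding this one. So the proposition as printed contains sign/argument typos; a careful write-up should derive the coefficients explicitly and flag the discrepancy rather than claim agreement with the display.
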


We conclude this section observing that it is possible to obtain a result 
similar to Proposition \ref{Prop-Zetadeterminant} using the asymptotic 
expansion of the 
resolvent trace. This is done using the 
definition of the zeta function involving the resolvent trace, 
\begin{equation}
	\zeta(s;L) = \frac{\sin\pi 
		s}{\pi}\cdot\frac{(N-1)!\cdot\Gamma(1+s-N)}{\Gamma(s)}\regint_0^\infty
		 z^{N-1-s}\cdot \tr\bl(L+z)^{-N}\br dz,
\end{equation} 
for some $N\geq p$ and $\Re s >N$. Here we used  \Eqref{Eq-HeatResol} and 
the definition of Euler Gamma function (\cf \cite[8.310-2]{GR}) to write the 
last equation.

\section{The Fredholm determinant and the 
$\zeta$-determinant}
\label{s.FDZD}

\subsection{Fredholm determinant of operators in some Schatten class}
\label{ss.FDDDS}

We first recall some facts on the Fredholm determinant. Under a discrete
dimension spectrum assumption we will derive our main result on the
relation between the Fredholm determinant and the zeta-regularized
determinant. Our standard reference for Fredholm determinants and trace class
operators are \cite{Sim77} and \cite{Sim2005}.

Recall from \cite[Chap. 3]{Sim77} that 
for a trace class operator $A$ in the Hilbert space $\calH$
\begin{equation}\label{eq.fredhold.det.1}
\begin{aligned}
		\det (I + z\cdot A) &= \sum_{k=0}^{\infty} \tr(\Lambda^k A)\cdot z^k 
		\\
		&=\prod_{n=1}^\infty (1+ z\cdot \lambda_n(A)),
\end{aligned}
\end{equation}
where $\Lambda^k A$ is the induced operator on the exterior power $\Lambda^k 
\calH$ and $\{\lambda_n(A)\}$ is the summable sequence of eigenvalues of $A$. 
The latter identity is equivalent to Lindskii's theorem \cite[Theorem 
3.7]{Sim77}.

\subsubsection{Standing assumptions}\label{sss.FDER.SA} During this 
subsection, we assume that 
we are given a self-adjoint operator $L$ satisfying \Eqref{eq-BoundedBelow} 
and \eqref{eq-Schatten}. Additionally, we assume from now on that $L$ is 
invertible. Denote by $\lambda_1 \leq \lambda_2 \leq \ldots $ the sequence of 
eigenvalues of $L$ with multiplicities. If $p=1$ then $L^{-1}$ is trace class 
and \eqref{eq.fredhold.det.1} implies
\begin{equation}\label{Eq-FredholmDeterminant}
	\det (I + L^{-1}) = \prod_{q=1}^{\infty} (1+\gl_q^{-1}).
\end{equation}

Due to the fact that $L^{-1}$ is trace class the right hand side is an 
absolutely 
convergent
product.


\begin{proposition}\label{Prop-DerivativeFredholmDet}
Under the Standing assumptions \ref{sss.FDER.SA}, $z\mapsto \det(I + z\cdot 
L^{-1})$ is an entire function of order $1$ with zeros exactly in $-\spec L$. 
The order of the zeros $-\gl$, $\gl \in \spec L$, equals the multiplicity of 
the eigenvalue $\gl$. Moreover, 
\begin{equation}\label{Eq-DerivativeFredholdDet}
	\frac{d}{dz} \log \det (I+z\cdot L^{-1})=  \tr((L+z)^{-1}).
\end{equation}
\end{proposition}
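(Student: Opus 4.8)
The plan is to establish three separate claims about $f(z) := \det(I + z \cdot L^{-1})$: that it is entire of order $1$, that its zeros are exactly $-\spec L$ with the correct multiplicities, and the logarithmic derivative formula \eqref{Eq-DerivativeFredholdDet}. First I would invoke \eqref{eq.fredhold.det.1}, which is applicable since $L^{-1}$ is trace class under the Standing assumptions \ref{sss.FDER.SA}, to write
\begin{equation*}
	f(z) = \prod_{q=1}^{\infty} (1 + z \cdot \gl_q^{-1}).
\end{equation*}
This immediately exhibits the zeros: since $1 + z\gl_q^{-1} = 0$ precisely at $z = -\gl_q$, and the eigenvalues $\gl_q$ are listed with multiplicity, the zero set is exactly $-\spec L$ with the order of the zero at $-\gl$ equal to the multiplicity of $\gl$. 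The absolute convergence of the product (noted just after \eqref{Eq-FredholmDeterminant}) guarantees $f$ is a well-defined entire function.

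Next I would pin down that the order is exactly $1$. That $f$ has order \emph{at most} $1$ is a standard general fact for Fredholm determinants of trace class operators, recorded in \cite{Sim77}; concretely it follows from the canonical-product structure above together with $\sum_q \gl_q^{-1} < \infty$ and $\sum_q \gl_q^{-1-\eps} < \infty$ but $\sum_q \gl_q^{-1}$ possibly divergent is not assumed here — rather, summability of $\{\gl_q^{-1}\}$ gives genus $0$ and hence order at most $1$. To see the order is not smaller, I would observe that $L^{-1}$ is self-adjoint with positive eigenvalues, so along the negative real axis all factors $1 + z\gl_q^{-1}$ are real, and one estimates $\log f(-r) = \sum_q \log(1 - r\gl_q^{-1})$ from below to rule out order strictly less than $1$; alternatively, order $<1$ would force only finitely many zeros, contradicting that $L^{-1}$ is a nonzero compact operator with infinitely many eigenvalues.

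Finally, for the logarithmic derivative I would differentiate the product representation term by term, justified by local uniform convergence of $\sum_q \log(1 + z\gl_q^{-1})$ away from the zeros, giving
\begin{equation*}
	\frac{d}{dz} \log f(z) = \sum_{q=1}^{\infty} \frac{\gl_q^{-1}}{1 + z\gl_q^{-1}} = \sum_{q=1}^{\infty} \frac{1}{\gl_q + z}.
\end{equation*}
The right-hand side is exactly $\tr((L+z)^{-1})$, since $L$ is self-adjoint with eigenvalues $\{\gl_q\}$ and $(L+z)^{-1}$ has eigenvalues $\{(\gl_q + z)^{-1}\}$, which are summable because $(L+z)^{-1}$ is trace class for $p=1$. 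The main obstacle I anticipate is the rigorous justification of term-by-term differentiation and the interchange with the trace: one must verify that $\sum_q (\gl_q + z)^{-1}$ converges locally uniformly for $z$ off $-\spec L$ (which follows from $\sum_q \gl_q^{-1} < \infty$ and a comparison estimate for $z$ in compact sets) so that the formal manipulation of the infinite product is legitimate. This is routine but is the one place where the trace-class hypothesis $p=1$ is genuinely used.
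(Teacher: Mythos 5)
Your core argument --- the product expansion $\det(I+z\cdot L^{-1})=\prod_{q}(1+z\,\gl_q^{-1})$, reading off the zeros $-\gl_q$ with their multiplicities, and term-by-term logarithmic differentiation giving $\sum_q(\gl_q+z)^{-1}=\tr((L+z)^{-1})$ --- is exactly the paper's proof, which likewise treats the product as a genus-zero canonical Weierstra{\ss} product and cites \cite[Chap.~3]{Sim77} for the standard facts.

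The genuine problem is your attempt to show the order is \emph{exactly} $1$; both arguments you offer for this fail. The claim that ``order $<1$ would force only finitely many zeros'' is false: entire functions of order $<1$ may have infinitely many zeros (\eg $\cos\sqrt z$, of order $1/2$). Worse, the exact-order statement is not provable under the Standing assumptions because it is not true in that generality: take $L$ diagonal with eigenvalues $\gl_q=q^2$ (self-adjoint, positive, invertible, trace class inverse); then
\begin{equation*}
	\det(I+z\cdot L^{-1})=\prod_{q=1}^{\infty}\left(1+\frac{z}{q^2}\right)
	=\frac{\sinh\left(\pi\sqrt z\right)}{\pi\sqrt z},
\end{equation*}
which is entire of order $1/2$. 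In general the order of a genus-zero canonical product equals the exponent of convergence of its zeros, which under the trace-class hypothesis $\sum_q\gl_q^{-1}<\infty$ can be any number in $[0,1]$; so your proposed lower bound for $\log f(-r)$ along the negative real axis (already problematic, since $f$ vanishes and changes sign there) cannot be repaired either. What the product structure actually yields, and all that \cite{Sim77} asserts, is order \emph{at most} $1$ --- the analogue of the ``order at most $N+1$'' statement in Theorem \ref{t.derivativeFD}. The paper's own proof makes no attempt to pin down the exact order (the proposition's ``order $1$'' should be read as ``order at most $1$''), so you should delete that portion of your argument; the rest stands.
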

\begin{proof}
The first claim is standard, \cf \cite[Chap. 3]{Sim77}, as the product 
expansion
\[
\det (I + z\cdot  L^{-1}) = \prod_{q=1}^{\infty} (1+z\cdot\gl_q^{-1})
\]
is a canonical Weierstra{\ss}-product of an entire function
of genus $0$. Therefore, the 
logarithmic
derivative may be taken term by term and the claim follows.
\end{proof}

It was shown in \cite[Section 6]{Sim77} that the previous
construction may be lifted to operators whose resolvent
is in the $p^{th}$-Schatten class. In this case one obtains a canonical
Weierstra{\ss}--product of higher genus. 
\begin{definition}\label{p.Fred.Det.N}
Suppose that $L^{-1}\in \B^p(\calH)$. Then for $N+1\geq p$ the
\emph{$N+1$-Fredholm determinant} of $I+L\ii$ is defined by
\begin{equation*}
\begin{aligned}
\det_{N+1}(I+L^{-1})&:= \prod_{q=1}^{\infty} \Bl (1+\ \gl_q^{-1}) \cdot 
\exp\Bl 
\sum_{k=1}^{N} (-1)^k\frac{\gl_q^{-k}}{k} \Br\Br\\
 &= \det \Bl(I+L^{-1}) 
\cdot \exp \Bl \sum_{k=1}^{N} (-1)^k \frac{L^{-k}}{k}\Br \Br.
\end{aligned}
\end{equation*}
\end{definition}
Note that for $p=1$, $\det_1(I+L^{-1}) = \det (I+L^{-1})$. The next theorem 
summarizes the main properties of the $N+1$-Fredholm determinant. 

\begin{theorem}\label{t.derivativeFD}
Let $L$ be a bounded below invertible self-adjoint operator with
$L^{-1}\in  \B^{p}(\calH)$. Then for $N+1\geq p$ the function $z\mapsto
\det_{N+1}(I + z\cdot L^{-1})$ is an entire function of order at most
$N+1$.  Moreover, $f_{N+1}(z):= \log\det_{N+1}(I + z\cdot L^{-1})$ satisfies
\begin{equation}
\frac{d}{dz} f_{N+1}(z) = (-z)^{N} \tr (L^{-N}(L+z)^{-1}),
\end{equation}
\begin{equation}\label{eq.LFDD.1}
f_{N+1}(0) = f'_{N+1}(0) = \ldots = f^{N}_{N+1}(0) = 0,
\end{equation}
and
\begin{equation}\label{eq.LFDD.2}
\frac{d^{N+1}}{dz^{N+1}} f_{N+1}(z) = (-1)^N N! \tr 
( (L+z)^{-(N+1)}).
\end{equation}
\end{theorem}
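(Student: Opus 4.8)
The plan is to establish the three claims by starting from the product formula in Definition \ref{p.Fred.Det.N} and differentiating term by term, justifying the interchange of sum and derivative via the genus/order bounds from Simon's theory. I would proceed as follows.

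\medskip

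\textbf{Order of the entire function.} First I would recall from \cite[Section 6]{Sim77} that $\det_{N+1}(I+z\cdot A)$ is the canonical Weierstra\ss{} product of genus $N$ associated to the eigenvalues $\gl_q^{-1}$ of $A=L^{-1}\in\B^p(\calH)$, and that such a product has order at most $N+1$ whenever $N+1\geq p$. Since the eigenvalue sequence of $L^{-1}$ is the same as $(\gl_q^{-1})_q$, this gives the first assertion directly, with no further work needed.

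\medskip

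\textbf{The logarithmic derivative.} The key computation is \Eqref{eq.LFDD.2}, and I would obtain the first-derivative formula and \eqref{eq.LFDD.2} simultaneously by differentiating
\[
	f_{N+1}(z) = \sum_{q=1}^\infty \Bl \log(1+z\gl_q^{-1}) - \sum_{k=1}^{N} (-1)^{k-1}\frac{(z\gl_q^{-1})^k}{k} \Br
\]
term by term. Using $\log(1+w) = \sum_{k\geq 1}(-1)^{k-1}w^k/k$, the $N$ subtracted terms cancel exactly the first $N$ terms of each logarithm, leaving the tail starting at $k=N+1$. Differentiating once in $z$ then produces a factor that telescopes into $\tr(L^{-N}(L+z)^{-1})$ up to the sign $(-z)^N$; concretely, for each eigenvalue the derivative of the tail is $\gl_q^{-1}\sum_{k\geq N}(-z\gl_q^{-1})^k = (-z)^N \gl_q^{-(N+1)}(1+z\gl_q^{-1})^{-1}$, and summing over $q$ gives $(-z)^N\tr(L^{-N}(L+z)^{-1})$, which is the stated first-derivative formula. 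Differentiating the surviving tail $N+1$ times instead (each term $(-1)^{k-1}z^k/(k\,\gl_q^{k})$ with $k\geq N+1$) and re-summing as a geometric-type series yields $(-1)^N N!\,\tr((L+z)^{-(N+1)})$, giving \eqref{eq.LFDD.2}.

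\medskip

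\textbf{Vanishing of low-order derivatives.} For \Eqref{eq.LFDD.1} I would simply observe that the surviving tail in $f_{N+1}(z)$ starts at $z^{N+1}$ (each eigenvalue contributes a power series in $z$ beginning at $k=N+1$), so $f_{N+1}$ vanishes to order $N+1$ at $z=0$; equivalently one reads it off from \eqref{eq.LFDD.2} together with the first-derivative formula, since the factor $(-z)^N$ forces $f'_{N+1}$ to vanish to order $N$ at the origin.

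\medskip

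\textbf{Main obstacle.} The only genuinely non-routine point is justifying the termwise differentiation and the interchange of $\tr$ with the infinite sum, since the individual logarithm series and the trace $\tr(L^{-N}(L+z)^{-1})$ must be shown to converge appropriately. The hypothesis $L^{-1}\in\B^p(\calH)$ with $N+1\geq p$ is precisely what guarantees that $L^{-N}(L+z)^{-1}$ is trace class (the product of an operator in $\B^{p/N}$, or more carefully in a suitable Schatten class with $\B^{p}$, lands in $\B^1$ by Hölder for Schatten norms), so that the traces on the right-hand sides are finite. I would invoke the local uniform convergence of the Weierstra\ss{} product on compact subsets of $\C$ (from Simon's theory) to legitimize differentiating the logarithm term by term, which is the technical heart of the argument; the algebraic cancellation itself is elementary once convergence is secured.
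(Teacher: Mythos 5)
Your proposal is correct and follows essentially the same route as the paper: both rest on Simon's theory of canonical Weierstra{\ss} products of genus $N$ for the order bound, and both obtain the derivative formulas by termwise (logarithmic) differentiation of the product combined with geometric-series algebra, summing over the eigenvalues to produce the traces. The only cosmetic difference is that you expand $\log(1+z\gl_q^{-1})$ as a power series, so your identities hold a priori only for $|z|<\gl_q$ and extend to all of $\C\setminus(-\spec L)$ by analytic continuation, whereas the paper uses the finite geometric sum $\sum_{k=0}^{N-1}w^k=(w^N-1)/(w-1)$, which is valid globally from the start.
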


\begin{proof}
If follows from the very definition that $f_{N+1}(z)$ is the logarithm
of a canonical Weierstra{\ss}-product of genus $N$, see also
\cite[Theorem 3.3 and Lemma 6.1]{Sim77}. Consequently, the logarithmic
derivate is the compactly convergent sum of the logarithmic derivatives of
the individual factors:
\begin{equation*}
  \frac{d}{dz} \log \det_{N+1}(I+z \cdot L^{-1}) =
     \sum_{q=1}^{\infty}\left( (\gl_q +  z)^{-1} +
     \sum_{k=0}^{N-1} (-1)^{k+1} \Bl\frac{z}{\gl}\Br^k \cdot \gl_q^{-1}
     \right).
\end{equation*}
The sum inside equals, for $z\not = \gl_q$,
\begin{equation*}
	-\frac{1}{\gl_q} \frac{\Bl- \frac{z}{\gl_q} \Br^N - 
	1}{-\frac{z}{\gl_q}-1} = \frac{\Bl- \frac{z}{\gl_q} \Br^N-1}{z+\gl_q},
\end{equation*}
hence
\begin{equation*}
\begin{aligned}
\frac{d}{dz} \log \det_{N+1} (I + z\cdot L^{-1}) &
&= (-z)^{N} \tr(L^{-N}(L+z)^{-1}),
\end{aligned}
\end{equation*}
thus \Eqref{eq.LFDD.1} and \eqref{eq.LFDD.2} are proved.
\end{proof}

It is a direct consequence of the last theorem that if $L$ has 
discrete dimension spectrum then the Fredholm determinant of $L$  has an 
asymptotic expansion, as $\Re z\to \infty$, that can be differentiated. On 
the other side, if the 
Fredholm determinant of $L$ has an asymptotic expansion, as $\Re z\to 
\infty$, that can be 
differentiated then by Theorem \ref{t.derivativeFD} and Propositions 
\ref{Prop-DDS-HAE} and \ref{Prop-RAHA}, $L$ has discrete dimension spectrum. 
We summarize this discussion in the next corollary (compare with \cite[Lemma 
2.2]{Spr06}).

\begin{corollary}\label{Corollary-DiscDimenSpeclogFred}
	Let $L$ be a self-adjoint bounded below operator in the Hilbert 
	space $\calH$ satisfying \Eqref{eq-BoundedBelow} and \eqref{eq-Schatten}.
	For $L$ to have discrete dimension spectrum it is necessary and 
	sufficient that the logarithm of the Fredholm determinant has an 
	asymptotic expansion, as $\Re z \to \infty$, that can be differentiated.
\end{corollary}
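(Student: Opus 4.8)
The plan is to prove both implications by connecting the Fredholm determinant expansion to the heat trace expansion via the resolvent trace, using the machinery already assembled. The key bridge is Theorem~\ref{t.derivativeFD}, specifically \eqref{eq.LFDD.2}, which identifies $\frac{d^{N+1}}{dz^{N+1}} f_{N+1}(z)$ with $(-1)^N N!\,\tr\bl(L+z)^{-(N+1)}\br$ up to the explicit constant. Choosing $N+1\geq p$, this means that the existence of a differentiable asymptotic expansion of $f_{N+1}(z)=\log\det_{N+1}(I+z\cdot L^{-1})$ as $\Re z\to\infty$ is equivalent to the existence of an asymptotic expansion of the resolvent trace $\tr\bl(L+z)^{-(N+1)}\br$ of the same type. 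So the corollary reduces to the equivalence between the resolvent trace expansion and the discrete dimension spectrum property.

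First I would establish the forward direction ($L$ has discrete dimension spectrum $\implies$ $f_{N+1}$ has a differentiable asymptotic expansion). By Proposition~\ref{Prop-DDS-HAE}, discrete dimension spectrum is equivalent to a differentiable heat trace expansion \eqref{eq.heat.zero}. The Proposition following Lemma~\ref{Lemma-Asymptotics} then gives, via \Eqref{Eq-ResolHeat}, an asymptotic expansion of $\tr\bl(L+z)^{-N}\br$ as $\Re z\to\infty$ for any $N\geq p$. Integrating this expansion (which is legitimate because \eqref{eq.LFDD.2} and the lower-order relations \eqref{eq.LFDD.1} pin down the constants of integration) yields an asymptotic expansion of $f_{N+1}(z)$ that can be differentiated termwise, the differentiability being inherited from the differentiability of the resolvent expansion.

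For the converse, I would run the implications in reverse. If $f_{N+1}(z)$ has a differentiable asymptotic expansion as $\Re z\to\infty$, then differentiating $N+1$ times and invoking \eqref{eq.LFDD.2} gives a differentiable asymptotic expansion of $\tr\bl(L+z)^{-(N+1)}\br$ of the form \eqref{Eq-Res-AsympExp}. Proposition~\ref{Prop-RAHA} then converts this resolvent trace expansion into an asymptotic expansion of the heat trace $\tr(e^{-tL})$ as $t\to 0+$. Finally, Proposition~\ref{Prop-DDS-HAE} upgrades this to the discrete dimension spectrum property, provided the heat expansion can be differentiated; this differentiability should follow from the differentiability of the resolvent expansion, propagated through the contour-integral representation \eqref{Eq-HeatResol} underlying Proposition~\ref{Prop-RAHA}.

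The main obstacle I anticipate is bookkeeping around the differentiability of the expansions at each conversion step, rather than any single deep estimate. In particular, the contour-integral passage of Proposition~\ref{Prop-RAHA} produces the heat expansion from the resolvent expansion, but to feed into Proposition~\ref{Prop-DDS-HAE} one needs the \emph{differentiated} form \eqref{Eq-Heat-Asymptotic-Exp-Prop-1}, so I must check that term-by-term differentiability survives the contour integral; this is where I would be most careful. A secondary subtlety is the integration step in the forward direction: recovering $f_{N+1}$ from its $(N+1)$-st derivative requires that the constant and the lower-order polynomial terms be correctly fixed, which is exactly guaranteed by \eqref{eq.LFDD.1}. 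Given that all these ingredients are already in place, the corollary is essentially a synthesis, and I would present it as a short chain citing Theorem~\ref{t.derivativeFD} together with Propositions~\ref{Prop-DDS-HAE} and~\ref{Prop-RAHA}.
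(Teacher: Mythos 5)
Your proposal is correct and follows essentially the same route as the paper: the paper's own justification is precisely the chain Theorem~\ref{t.derivativeFD} (linking $\frac{d^{N+1}}{dz^{N+1}}\log\det_{N+1}$ to the resolvent trace), Proposition~\ref{Prop-DDS-HAE} (discrete dimension spectrum $\Leftrightarrow$ differentiable heat expansion), the resolvent-trace expansion proposition following Lemma~\ref{Lemma-Asymptotics} for the forward direction, and Proposition~\ref{Prop-RAHA} for the converse. Your additional bookkeeping remarks (integration constants, differentiability surviving each conversion) are sensible refinements of exactly the synthesis the paper records in the discussion preceding the corollary.
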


The explicit knowledge of the coefficients of the 
asymptotic expansion 
of the Fredholm determinant as $\Re z \to \infty$ implies the explicit 
knowledge of the coefficients of the 
heat expansion as $t\to 0+$. However, the converse of this
implication is not true, \eg we will see below that the asymptotic expansion 
of the Fredholm determinant contains $\log\det_{\zeta} L$ which is a 
``global'' invariant of the heat trace.

\subsection{Asymptotic expansion of the Fredholm determinant}
\label{ss.RZDAE}
In this section we assume that $\calH$ is a separable Hilbert space and $L$ 
is an invertible
self-adjoint operator satisfying \Eqref{eq-BoundedBelow} and 
\eqref{eq-Schatten}.

Now we come to one of our main results.

\begin{theorem}\label{Theo-Main}
Let $L$ be an invertible self-adjoint bounded below operator in the 
Hilbert space
$\calH$ satisfying \Eqref{eq-BoundedBelow} and \eqref{eq-Schatten}. 
Furthermore
assume that $L$ has discrete dimension spectrum as defined in Def. \plref{p.DDS}
and that the $\zeta$-function $\zeta(s;L)$ of $L$ is regular at 
$0$ and has
at most simple poles in $s=1, \ldots, p-1$. Then
\begin{equation}
  \detz (L + z) = \exp\Bl \sum_{j=0}^{p-1} \frac{z^j}{j!} \cdot
  \frac{d^j}{dz^j} 
  \log \detz (L+z) |_{z=0} \Br \cdot \det_{p}(I + z \cdot L^{-1} ).
\end{equation}
\end{theorem}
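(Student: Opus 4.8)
The plan is to compare the two entire functions
$z\mapsto \log\detz(L+z)$ and $z\mapsto \log\det_p(I+z\cdot L^{-1})$
through their derivatives. Both are defined for $\Re z>0$ (and extend
suitably), so I would set
$g(z):=\log\detz(L+z)-\log\det_p(I+z\cdot L^{-1})$
and aim to show that $g$ is a polynomial of degree at most $p-1$; the
exponentiated statement of the theorem then follows immediately by Taylor
expanding this polynomial at $z=0$ and noting that
$\log\det_p(I+z\cdot L^{-1})$ together with its first $p-1$ derivatives
vanish at $z=0$ by \Eqref{eq.LFDD.1} (with $N+1=p$), so that the Taylor
coefficients of $g$ at $0$ coincide with those of $\log\detz(L+z)$ for
$j=0,\dots,p-1$.

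The key computation is to differentiate $g$ exactly $p$ times. On the
Fredholm side, Theorem \ref{t.derivativeFD}, \Eqref{eq.LFDD.2} with $N+1=p$
gives
\[
  \frac{d^{p}}{dz^{p}}\log\det_p(I+z\cdot L^{-1})
    = (-1)^{p-1}(p-1)!\,\tr\bl(L+z)^{-p}\br.
\]
On the $\zeta$-side I would first establish that for $\Re z>0$ one has
$\frac{d}{dz}\log\detz(L+z)=\tr\bl(L+z)^{-1}\br$ when this is trace class,
and more generally, by repeatedly differentiating the defining relation
$\log\detz(L+z)=-\zeta'(0;L+z)$ and using that $\zeta(s;L+z)$ is jointly
holomorphic near the relevant points (as in the proof of
Proposition \ref{p.taylor.exp.logdetzeta}), that
\[
  \frac{d^{p}}{dz^{p}}\log\detz(L+z)
    = (-1)^{p-1}(p-1)!\,\tr\bl(L+z)^{-p}\br
\]
as well, for $\Re z>0$. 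The point is that differentiating
$-\zeta'(0;L+z)=-\frac{d}{ds}\big|_{s=0}\tr((L+z)^{-s})$ $p$ times in $z$
brings down the factor $(-1)^{p}s(s+1)\cdots(s+p-1)\,\zeta(s+p;L+z)$, whose
$s$-derivative at $s=0$ (where $\zeta(\cdot\,;L+z)$ is regular at $s=p$ for
$\Re z>0$ by Lemma \ref{p.zetanull}, the hypothesis on simple poles being
exactly what guarantees regularity here) picks out precisely
$(-1)^{p-1}(p-1)!\,\zeta(p;L+z)=(-1)^{p-1}(p-1)!\,\tr((L+z)^{-p})$.

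Subtracting, $g^{(p)}(z)\equiv 0$ for $\Re z>0$, so $g$ is a polynomial of
degree at most $p-1$ on the right half-plane, hence everywhere by analytic
continuation, which is the assertion. The main obstacle I anticipate is the
$\zeta$-side identity for the $p$-th derivative: one must legitimately commute
$\frac{d^p}{dz^p}$ with $\frac{d}{ds}\big|_{s=0}$ and ensure that the factor
$s(s+1)\cdots(s+p-1)$, which has a \emph{simple zero} at $s=0$, cancels the
\emph{simple pole} that $\zeta(s+p;L+z)$ could a priori have at $s=0$ (i.e.\ a
pole of $\zeta$ at $s=p$). This is precisely where the hypothesis ``at most
simple poles in $s=1,\dots,p-1$'' together with regularity at $0$ enters: it
keeps $s=0$ a regular point of $\zeta(\cdot\,;L+z)$ and makes $s=p$ at worst a
simple pole, so the product $s\cdots(s+p-1)\,\zeta(s+p;L+z)$ is holomorphic at
$s=0$ with a computable derivative. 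I would treat the joint holomorphicity and
the interchange of limits carefully via Lemma \ref{p.zetanull} and Hartogs's
theorem exactly as in Proposition \ref{p.taylor.exp.logdetzeta}, and then the
remaining steps are routine.
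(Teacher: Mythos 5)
Your proposal is correct and follows essentially the same route as the paper's own proof: both establish that $\frac{d^p}{dz^p}\log\detz(L+z)$ and $\frac{d^p}{dz^p}\log\det_p(I+z\cdot L^{-1})$ coincide with $(-1)^{p-1}(p-1)!\,\tr\bl(L+z)^{-p}\br$ — the Fredholm side via Theorem \ref{t.derivativeFD}, the $\zeta$-side by commuting $\frac{d^p}{dz^p}$ with $\frac{d}{ds}\big|_{s=0}$ using the joint holomorphy from Proposition \ref{p.taylor.exp.logdetzeta} and Lemma \ref{p.zetanull} — and then identify the resulting degree-$(p-1)$ polynomial difference through \Eqref{eq.LFDD.1}. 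The only cosmetic difference is your attribution of the regularity of $\zeta(\,\cdot\,;L+z)$ at $s=p$ to the simple-pole hypothesis; strictly speaking, that regularity comes from the Schatten assumption \eqref{eq-Schatten} (the defining series converges absolutely and locally uniformly on $\Re s\geq p$, so no pole can sit at $s=p$), while the hypothesis on $s=0,1,\dots,p-1$ is what makes $\zeta(s;L+z)$ regular at $s=0$ and hence $\detz(L+z)$ well defined — a point the paper itself leaves implicit.
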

\begin{remark}
Recall that the assumptions on the regularity at $0$ and the simple poles
in the positive integers are equivalent to the assumption that
in the heat expansion $A_{\ga, k}^H(L)=0$ for $-\ga\in\Z_+$ and $k>0$, \ie we 
are under the Standing assumptions \ref{sss.ZDER.SA}.
\end{remark}
\begin{proof}
	By Proposition \ref{p.taylor.exp.logdetzeta}, $\zeta(s;L+z)$ is 
	holomorphic for $(s,z)$ in a neighbourhood of $(0,0)$. Again we commute 
	the derivative in $z$ with the derivative in $s$ to obtain, 
	\begin{equation*}
	\begin{aligned}
	\frac{d^p}{dz^p} \log \det_{\zeta}(L+z) &= - \frac{d^p}{dz^p}\Bl 
	\frac{d}{ds} \zeta(s;L+z) |_{s=0}\Br
	&= - \frac{d}{ds}\Bl 
	\frac{d^p}{dz^p} \zeta(s;L+z) \Br|_{s=0}.
	\end{aligned}
	\end{equation*}
	This implies that
	\begin{equation*}
	\begin{aligned}
	-\frac{d}{ds}\frac{d^p}{dz^p} \zeta(s;L+z)&= \sum_{\gl \in \spec L} 
	(-s-1)\ldots (-s-p+1)(\gl+z)^{-s-p}\\
	&- (-s) \sum_{\gl \in \spec L} \frac{d}{ds} \Bl (-s-1)\ldots 
	(-s-p+1)(\gl+z)^{-s-p} \Br.
	\end{aligned}
	\end{equation*}
	Therefore, using  Theorem \ref{t.derivativeFD} 
	we obtain
	\begin{equation*}
	\begin{aligned}
	\frac{d^p}{dz^p} \log \det_{\zeta}(L+z) &= (-1)^{p-1}(p-1)! \tr 
	(L+z)^{-p}\\
	&= \frac{d^p}{dz^p} \log \det_p (I + z\cdot L^{-1}).
	\end{aligned}
	\end{equation*}
	Now the result follows from the series representation of the function 
	$\log\det_{\zeta}(L+z)$ at $z=0$.
\end{proof}

The value at zero is the best we can obtain from the heat expansion (\cf 
Proposition \ref{Prop-Zetadeterminant}). The 
$\zeta$-regularized determinant will not only depend
on the heat expansion. However, the 
asymptotic expansion of the 
Fredholm determinant delivers sufficient information to obtain the 
derivative of the $\zeta$-regularized determinant. In 
\cite{Fri}, the constant term of the 
asymptotic expansion of the Fredholm determinant as $\Re z \to \infty$ is 
determined for an elliptic differential 
operator, and it is 
equal to minus the logarithm of the $\zeta$-regularized determinant of the 
operator. 
A similar result for a self-adjoint operator in Hilbert 
space with trace class resolvent is proved in \cite[pag. 3439]{HLV17}. 
Theorem \ref{Theo-Main} provides 
an extension of these results as we see in the next corollary.


\begin{corollary}\label{Cor-AEFD}
	Let $L$ an invertible self-adjoint bounded below operator in 
	the Hilbert 
	space
	$\calH$ satisfying \Eqref{eq-BoundedBelow} and \eqref{eq-Schatten}. 
	Furthermore
	assume that $L$ has discrete dimension spectrum as defined in Def. 
	\plref{p.DDS}
	and that the $\zeta$-function of $L$ is regular 
	at $0$ and 
	has
	at most simple poles in $s=1, \ldots, p-1$.
	Then the logarithm of the Fredholm determinant of $I+z\cdot L^{-1}$ has 
	an 
	asymptotic expansion as follows
	\begin{align*}
			\log&\det_p (I + z\cdot L^{-1})\nonumber\\ 
			&\sim \sum_{\ga,k} A_{\ga k}^F(L) 
			\cdot 
			z^{-\ga}\cdot \log^k z,\qquad\text{ as }\qquad \Re z \to 
			\infty\nonumber\\
			&\sim -\sum_{\stackrel{\ga, k}{-\ga\not \in \Z_+}} (-1)^k 
			\sum_{j\geq k} A_{\ga j}^H(L) \binom{j}{k}\cdot (\Pf 
			\pl^{j-k}\Gamma)(\alpha) \cdot z^{-\alpha}\log z\\
			&\;\;\;\; - \log\det_{\zeta}(L)+ A_{00}^H(L) \cdot \log z\\
			&\;\;\;\; + \sum_{n=1}^{\infty} A^H_{-n,0}(L)\cdot 
			\frac{(-1)^n}{n!} \cdot z^n\log z 
			+ \sum_{n=p}^{\infty} A^H_{-n,0}(L)\cdot 
			\frac{(-1)^{n+1}}{n!}\cdot L_n \cdot z^n\\
			&\;\;\;\; + \sum_{n=1}^{p-1} \frac{(-1)^{n+1}}{n} \cdot 
			\Bl\frac{A^H_{-n,0}}{n!}  - 
			(\Pf_{s=n}\zeta(s;L))\Br\cdot z^n, \qquad
			\text{as} \qquad \Re z \to 
			\infty.
	\end{align*}
	In particular\footnote{The first two equalities of 
	\Eqref{Eq-Fredholm00term} and \eqref{Eq-Fredholm01term} were proved in 
	\cite[Proposition 
	2.6]{Spr06} assuming the asymptotic expansion of $\log\det_p(I+z\cdot 
	L^{-1})$ as $z\to \infty$.},
	\begin{align} 
			A_{00}^F(L) &= -\log\det_{\zeta}(L) = \;\gamma\cdot  
			A_{00}^H(L) + 
			\regint_{0}^\infty t^{-1} \tr (e^{-t L}) dt, 
			\label{Eq-Fredholm00term}\\
			A_{01}^F(L) &=\; \zeta(0;L) =\; A_{00}^H(L). 
			\label{Eq-Fredholm01term}
	\end{align}
\end{corollary}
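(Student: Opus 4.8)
The plan is to obtain the whole expansion by subtracting a single explicit polynomial from the expansion already established in Theorem \ref{Theorem-AsympExpLogZetaDet}. Since $L$ is invertible, $\dim\ker L=0$, and by the Remark following Theorem \ref{Theo-Main} the hypotheses of the Corollary place us under the Standing assumptions \ref{sss.ZDER.SA}. Taking the logarithm in Theorem \ref{Theo-Main} then yields the exact identity
\begin{equation*}
  \log\det_p(I+z\cdot L^{-1}) = \log\detz(L+z) - \sum_{j=0}^{p-1}\frac{z^j}{j!}\cdot\frac{d^j}{dz^j}\log\detz(L+z)\big|_{z=0},
\end{equation*}
valid for $\Re z>0$; here the branch of each logarithm is the one fixed by Theorem \ref{t.derivativeFD} and Proposition \ref{Prop-Zetadeterminant}, and \eqref{eq.LFDD.1} guarantees that the subtracted polynomial $P(z)$ is precisely the degree $\leq p-1$ Taylor polynomial of $\log\detz(L+z)$ at $z=0$. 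In particular $P(0)=\log\detz(L)$.

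First I would insert the asymptotic expansion \eqref{eq.asymp.log.zeta.det.1}--\eqref{eq.asymp.log.zeta.det.3} of $\log\detz(L+z)$ and the Taylor coefficients of Proposition \ref{p.taylor.exp.logdetzeta}, so that
\begin{equation*}
  P(z) = \log\detz(L) + \sum_{n=1}^{p-1}\frac{(-1)^n}{n}\Bl\Pf_{s=n}\zeta(s;L)+\frac{A_{-n,0}^H}{(n-1)!}L_{n-1}\Br z^n.
\end{equation*}
Because $P(z)$ carries no logarithmic factors, every $z^{-\ga}\log^k z$ term with $-\ga\notin\Z_+$, the single $\log z$ term, and all $z^n\log z$ terms survive the subtraction unchanged; these reproduce the first line of the Corollary, the contribution $A_{00}^H(L)\log z$, and the first sum $\sum_{n\geq1}A_{-n,0}^H(L)\tfrac{(-1)^n}{n!}z^n\log z$ (using $\Res_{-n}\Gamma=(-1)^n/n!$).

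The only terms needing genuine bookkeeping are the pure powers $z^n$. For $n\geq p$ the polynomial $P(z)$ does not reach, so the coefficient is exactly the $-L_n z^n$ part of \eqref{eq.asymp.log.zeta.det.3}, which furnishes the sum $\sum_{n\geq p}A_{-n,0}^H(L)\tfrac{(-1)^{n+1}}{n!}L_n z^n$. For $1\leq n\leq p-1$ one must combine that same $z^n$-coefficient of \eqref{eq.asymp.log.zeta.det.3} with the subtraction of the corresponding coefficient of $P(z)$; collecting the two $A_{-n,0}^H$-pieces and the $\Pf_{s=n}\zeta$-piece and simplifying with $\tfrac1n\tfrac1{(n-1)!}=\tfrac1{n!}$ produces the stated finite sum over $1\leq n\leq p-1$. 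Keeping the signs, and the harmonic numbers $L_n$ versus $L_{n-1}$, straight is the only delicate point here, and it is purely computational.

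Finally the two displayed identities are read off as single coefficients. The constant term $A_{00}^F(L)$ is the $z^0\log^0 z$ coefficient of Theorem \ref{Theorem-AsympExpLogZetaDet}, which vanishes, minus $P(0)=\log\detz(L)$; hence $A_{00}^F(L)=-\log\detz(L)$, and Proposition \ref{Prop-Zetadeterminant} with $\dim\ker L=0$ rewrites this as $\gamma\cdot A_{00}^H(L)+\regint_0^\infty t^{-1}\tr(e^{-tL})dt$, which is \eqref{Eq-Fredholm00term}. The coefficient of $\log z$ receives only the $A_{00}^H(L)$ of \eqref{eq.asymp.log.zeta.det.2}, since $P$ is log-free; by \eqref{eq.zeta.det.asymp.heat.exp.1} with $\dim\ker L=0$ this equals $\zeta(0;L)=A_{00}^H(L)$, which is \eqref{Eq-Fredholm01term}. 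The main obstacle is thus not conceptual but the careful collection of the $1\leq n\leq p-1$ coefficients; everything else is a direct read-off from results already in hand.
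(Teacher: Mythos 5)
Your overall route coincides with the paper's own proof: take the logarithm of Theorem \ref{Theo-Main} and insert the large-$\Re z$ expansion of Theorem \ref{Theorem-AsympExpLogZetaDet} together with the Taylor coefficients of Proposition \ref{p.taylor.exp.logdetzeta}; your read-offs of \eqref{Eq-Fredholm00term} and \eqref{Eq-Fredholm01term}, using $\dim\ker L=0$, are also the intended ones. However, there is a concrete failure in the one step you dismiss as ``purely computational''. The polynomial you subtract,
\[
 P(z) = \log\detz(L) + \sum_{n=1}^{p-1}\frac{(-1)^n}{n}\Bl\Pf_{s=n}\zeta(s;L)+\frac{A_{-n,0}^H}{(n-1)!}\,L_{n-1}\Br z^n,
\]
carries the sign $(-1)^n$ of \eqref{eq.deriv.zeta.z.zero}, and with this sign the collection for $1\le n\le p-1$ does \emph{not} produce the stated sum. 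Indeed, by \eqref{eq.Gamma.facts.2} the pure $z^n$-coefficient in \eqref{eq.asymp.log.zeta.det.3} is $\frac{(-1)^{n+1}}{n!}A^H_{-n,0}L_n$, so subtracting your $P(z)$ yields
\[
 \frac{(-1)^{n+1}}{n!}A^H_{-n,0}\bl L_n+L_{n-1}\br + \frac{(-1)^{n+1}}{n}\,\Pf_{s=n}\zeta(s;L),
\]
i.e.\ the harmonic numbers add instead of telescoping and $\Pf_{s=n}\zeta$ comes out with the wrong sign, whereas the Corollary requires $\frac{(-1)^{n+1}}{n}\Bl\frac{A^H_{-n,0}}{n!}-\Pf_{s=n}\zeta(s;L)\Br$.

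The defect is inherited from the paper itself: \eqref{eq.deriv.zeta.z.zero} as printed is inconsistent with the displayed Taylor expansion in the same proposition, and it is \eqref{eq.deriv.zeta.z.zero} that has the wrong sign. A direct computation settles this: $\partial_z^n\zeta(s;L+z)|_{z=0}=(-1)^n\, s(s+1)\cdots(s+n-1)\,\zeta(s+n;L)$, hence, using \eqref{eq.zeta.near.n},
\[
  \partial_z^n\big|_{z=0}\log\detz(L+z)= (-1)^{n+1}(n-1)!\,\Bl\Pf_{s=n}\zeta(s;L)+\frac{A^H_{-n,0}}{(n-1)!}\,L_{n-1}\Br,
\]
so the correct prefactor is $(-1)^{n+1}$, not $(-1)^n$. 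With this corrected $P(z)$ the combination $L_n-L_{n-1}=\frac1n$ appears and the collection gives exactly the stated coefficient; the remainder of your argument (the untouched $z^{-\ga}\log^k z$ terms, the $\log z$ term, the $z^n\log z$ terms, the $n\ge p$ terms, and the two displayed identities) is correct. So the gap is not conceptual, but as written the crucial $1\le n\le p-1$ bookkeeping fails, and to certify the Corollary you must first resolve the sign discrepancy rather than cite \eqref{eq.deriv.zeta.z.zero} at face value.
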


\begin{proof}
	From Theorem \ref{Theo-Main} we have that
	\begin{equation*}
		\log\det_{p}(I+z\cdot L^{-1}) = \log\det_{\zeta}(L+z) - 
		\sum_{j=0}^{p-1} 
		\frac{z^j}{j!} \cdot
		\frac{d^j}{dz^j} 
		\log \detz (L+z) |_{z=0}.
	\end{equation*}
Now we apply Theorem \ref{Theorem-AsympExpLogZetaDet}, Proposition 
\ref{p.taylor.exp.logdetzeta} and obtain the asymptotic expansion of the 
Fredholm determinant as $\Re z \to \infty$.
\end{proof}

%

\bibliography{ZFdeterminants}
\bibliographystyle{amsalpha}


\end{document}